\newcommand{\B}[1]{{\mathbf #1}}
\newcommand{\C}[1]{{\mathcal #1}}
\newcommand{\F}[1]{{\mathfrak #1}}
\newtheorem{theorem}[equation]{Theorem}%[section]
\newtheorem{corollary}[equation]{Corollary}
\newtheorem{lemma}[equation]{Lemma}
\newtheorem{proposition}[equation]{Proposition}
\theoremstyle{definition}
\newtheorem{example}[equation]{Example}
\theoremstyle{remark}
\newtheorem{remark}[equation]{Remark}
\newtheorem{question}[equation]{Question}
\numberwithin{equation}{section}
\numberwithin{figure}{section}
\numberwithin{table}{section}
\newcommand{\OP}{\operatorname}
\newcommand\Symp{\OP{Symp}}
\newcommand\Ham{\OP{Ham}}
\newcommand\Isom{\OP{Isom}}
\newcommand\Diff{\OP{Diff}}
\newcommand{\la}{{\lambda}}
\begin{document}

\title[Two-cocycle]{A two-cocycle on the group of symplectic diffeomorphisms}
\author{\'Swiatos\l aw Gal}
\address{
\'S.G.:
Uniwersytet Wroc{\l}awski \& Universit\"at Wien\\
}
\email{
\tt sgal@math.uni.wroc.pl
}
\author{Jarek K\k edra}
\address{
J.K.:
University of Aberdeen \& Uniwersytet Szczeci\'nski\\
}
\email{
\tt kedra@abdn.ac.uk
}

\subjclass[2010]{53D05, 57S25; 22E41}
\thanks{The first author is was partially supported by Polish MNiSW grant N N201 541738} 
\keywords{Symplectic manifold; K\"ahler cocycle; foliation}
\begin{abstract}
We investigate the properties of a two-cocycle on the group of
symplectic diffeomorphisms of an exact symplectic manifold defined by
Ismagilov, Losik, and Michor. We
provide both vanishing and nonvanishing results and applications to
foliated symplectic bundles and to Hamiltonian actions of finitely
generated groups.
\end{abstract}

\maketitle

%\tableofcontents

\section{Introduction}
Let $(M,d\la)$ be a connected  exact symplectic manifold with trivial first
real cohomology, $H^1(M;\B R)=0$. In this paper we investigate a two-cocycle
$\F G$ on the group $\Symp(M,d\la)$ of symplectic diffeomorphisms of
$(M,d\la)$.  This cocycle was defined by R.~Ismagilov, M.~Losik, and P.~Michor
in~\cite{MR2270616} where they proved that it is cohomologically nontrivial
when $M$ is either the standard symplectic $\B R^{2n}$ or a Hermitian symmetric
space.  The following theorem generalises their results.

\begin{theorem}\label{T:sas}
Let $(M,d\la)$ be the universal cover of a {\em closed}
symplectic manifold $(X,\sigma)$. The cocycle
$\mathfrak G$ represents a nonzero cohomology class.
\end{theorem}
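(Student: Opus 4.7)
I would restrict $\mathfrak G$ to the deck-transformation subgroup and identify the restriction with a topological invariant of $X$. Since $d\lambda$ is the pull-back of $\sigma$ under the covering $p\colon M\to X$, every $\gamma\in\Gamma:=\pi_1(X)$ preserves $d\lambda$, giving an embedding $\iota\colon\Gamma\hookrightarrow \Symp(M,d\lambda)$. My goal is to prove $\iota^*[\mathfrak G]\neq 0$ in $H^2(\Gamma;\B R)$, which a fortiori gives $[\mathfrak G]\neq 0$. Note that $\Gamma$ is automatically non-trivial: if $X$ were simply connected then $M=X$ would be closed, but a closed manifold cannot carry an exact symplectic form since $\int_X (d\lambda)^n=0$ by Stokes.

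\textbf{Step 1: primitives.} For each $\gamma\in\Gamma$ the one-form $\gamma^*\lambda-\lambda$ is closed, and exact because $H^1(M;\B R)=0$; pick a primitive $h_\gamma\in C^\infty(M)$ with $dh_\gamma=\gamma^*\lambda-\lambda$. These functions, well-defined modulo an additive constant that has to be pinned down, are the natural variables in which to expand $\iota^*\mathfrak G(\gamma_1,\gamma_2)$ via the defining formula of the Ismagilov--Losik--Michor cocycle.

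\textbf{Step 2: descent.} The integrands produced in Step 1 are $\Gamma$-equivariant on $M$, so they descend to a fundamental domain $D\subset M$ for the $\Gamma$-action, for which $\int_D(d\lambda)^n=\int_X \sigma^n<\infty$. This turns $\iota^*\mathfrak G$ into an honest $\B R$-valued $2$-cocycle on $\Gamma$ whose value is a sum of (i) bulk integrals over $D$ and (ii) boundary contributions obtained by gluing faces of $D$ via elements of $\Gamma$ (which inject jumps of $h_\gamma$).

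\textbf{Step 3: identification with $[\sigma]$.} By Hopf's theorem the map $H^2(\Gamma;\B R)\to H^2(X;\B R)$ induced by the classifying map $X\to B\Gamma$ is injective. A Stokes-type bookkeeping of the boundary contributions in Step 2 should identify the image of $\iota^*[\mathfrak G]$ in $H^2(X;\B R)$ with a nonzero multiple of the symplectic class $[\sigma]$, which is nonzero because $\int_X \sigma^n\neq 0$. Injectivity then forces $\iota^*[\mathfrak G]\neq 0$.

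\textbf{Main obstacle.} The crux is Step 3: one must carry out the Stokes-and-gluing computation explicitly and check that the combinations of $h_\gamma$'s appearing in $\iota^*\mathfrak G$ assemble into a cocycle representing a nonzero class. A subsidiary worry is that $[\sigma]$ may fail to lie in the image of $H^2(\pi_1(X);\B R)\to H^2(X;\B R)$ when $X$ is not symplectically aspherical; in that case I would detect non-triviality of $\iota^*[\mathfrak G]$ by pairing it against two-cycles of $B\Gamma$ coming from maps of closed aspherical surfaces $\Sigma\to X$ with $\int_\Sigma f^*\sigma\neq 0$, which always exist provided $[\sigma]$ is nonzero on some $\pi_1$-injective surface (and otherwise requires a separate argument specific to the spherical part of $[\sigma]$).
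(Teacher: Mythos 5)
Your overall strategy coincides with the paper's: restrict $\mathfrak G$ to the deck group $\Gamma=\pi_1(X)$, pull the class back along $X\to B\Gamma$, and identify the result with $[\sigma]$, which is nonzero because $\int_X\sigma^n\neq 0$. However, the step you yourself flag as the crux --- the ``Stokes-and-gluing computation'' --- is exactly where the proof lives, and the mechanism you propose for it (descending equivariant integrands to bulk integrals over a fundamental domain $D$ with $\int_D(d\lambda)^n=\int_X\sigma^n$) is not the right shape: $\mathfrak G(g,h)=\int_x^{hx}g^*\lambda-\lambda$ is a line integral, and nothing in your Step 2 explains how pairs of group elements get matched with $2$-cells of $X$ so that the cocycle computes $\int\sigma$ over them. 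The missing idea is a coboundary correction: set $\mathfrak k(g):=\int_{\ell_{x,gx}}\lambda$ for a chosen path $\ell_{x,gx}$ from the basepoint to $gx$; then a direct computation (the paper's Lemma \ref{L:trilateral}) shows that $(\mathfrak G+\delta\mathfrak k)(g,h)=\int_\triangle d\lambda$ over the trilateral with vertices $x,gx,ghx$. Combining this with a one-vertex {\sc cw}-structure on $X$ (so that each $2$-cell lifts to such a trilateral and the classifying map $X\to B\Gamma$ sends it to the simplex labelled $(g,h)$) identifies the pulled-back cocycle with the simplicial cocycle $\Delta\mapsto\int_\Delta\sigma$, hence with $[\sigma]$ under de Rham. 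Without this (or an equivalent) explicit identification, your argument is a plan rather than a proof.

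Two smaller points. First, your ``subsidiary worry'' about $[\sigma]$ not lying in the image of $H^2(\Gamma;\B R)\to H^2(X;\B R)$ is vacuous under the hypotheses: exactness of the pullback of $\sigma$ to the universal cover is precisely symplectic asphericity, so $[\sigma]$ vanishes on spheres and does lie in that image; the contingency plan involving $\pi_1$-injective surfaces is not needed. Second, the appeal to Hopf's theorem is logically redundant: once you show that the composite pullback of $\iota^*[\mathfrak G]$ to $H^2(X;\B R)$ equals the nonzero class $[\sigma]$, the nonvanishing of $\iota^*[\mathfrak G]$ (and hence of $[\mathfrak G]$) follows immediately, with no injectivity required.
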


A symplectic manifold $(X,\sigma)$ whose universal cover is
exact is called {\bf symplectically aspherical}. The reason
is that the property can be equivalently characterised by
the vanishing of the symplectic area of every sphe\-re in $X$.
More precisely, $(X,\sigma)$ is symplectically aspherical if and
only if
$$
\int_{S^2}s^*\sigma =0
$$
for every smooth map $s\colon S^2\to X$  
(see \cite{MR2402905} for a survey).

\subsection{Vanishing properties}
It is interesting to ask about the restriction of the cocycle $\F G$ to
various subgroups of the group symplectic diffeomorphisms of
$(M,d\la)$. It turns out that its cohomology class vanishes on the
subgroup of compactly supported symplectic diffeomorphisms $\Symp_c(M,d\la)$
and on sub\-groups preserving certain isotropic submanifolds.

\begin{theorem}\label{T:vanish-cpt}
The cocycle $\F G$ restricted to $\Symp_c(M,d\la)$ represents the
trivial cohomology class.
\end{theorem}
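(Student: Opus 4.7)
The strategy is to trivialize $\F G|_{\Symp_c}$ by promoting the defining section $\phi \mapsto f_\phi$ to an honest $1$-cocycle, taking values in a space where the constant defect measured by $\F G$ is tautologically zero.

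Recall that for $\phi \in \Symp(M,d\la)$ the $1$-form $\phi^*\la-\la$ is closed and, by $H^1(M;\B R)=0$, equals $df_\phi$ for some smooth $f_\phi$, unique up to an additive constant. The cocycle $\F G$ is built from the resulting constant defect $\F G(\phi,\psi) = f_{\phi\psi} - \psi^*f_\phi - f_\psi \in \B R$, and its cohomology class does not depend on the chosen section $\phi \mapsto f_\phi$. For $\phi \in \Symp_c$ the form $\phi^*\la-\la$ is itself compactly supported, so every primitive is locally constant outside a compact set, and I would take $f_\phi$ to be the unique primitive with compact support, obtained by normalizing $f_\phi$ to vanish on the unbounded part of $M \setminus \OP{supp}\phi$. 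Then for $\phi,\psi \in \Symp_c$ the combination $\psi^*f_\phi + f_\psi$ remains compactly supported (since $\psi$ fixes points near infinity) and is still a primitive of $(\phi\psi)^*\la - \la$; it therefore agrees with $f_{\phi\psi}$ up to an additive constant, and the only compactly supported constant function on the noncompact $M$ is zero. Consequently the defect vanishes identically for this canonical section, and comparing with any other section used to define $\F G$ exhibits $\F G|_{\Symp_c}$ as the coboundary of the $1$-cochain $a\colon \Symp_c \to \B R$ that records the shift between the two sections.

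The step I expect to be the main obstacle is guaranteeing that the compactly supported primitive actually exists, equivalently that $[\phi^*\la-\la] = 0$ in $H^1_c(M;\B R)$ for every $\phi \in \Symp_c$. For $\phi$ in the identity component of $\Symp_c$ this reduces, by Cartan's formula applied to a compactly supported isotopy $\phi_t$ generated by $X_t$, to the vanishing of the compactly supported flux class $\left[\int_0^1 \phi_t^*(\iota_{X_t}d\la)\,dt\right] \in H^1_c(M;\B R)$; I expect to handle it by combining $H^1(M;\B R)=0$ with an averaging argument along the isotopy, which forces the flux representative to be exact with compactly supported primitive. The remaining connected components of $\Symp_c$ would be treated via conjugation invariance of the construction together with a coboundary-level adjustment absorbing any residual shift into the $1$-cochain $a$.
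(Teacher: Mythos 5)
Your overall strategy --- remove the additive ambiguity in the primitive $f_\phi$ of $\phi^*\la-\la$ by a canonical normalisation, so that $\phi\mapsto f_\phi$ becomes an honest lift of the one-cocycle $\C K$ to $C^\infty(M)$ and the connecting homomorphism kills $[\F G]$ --- is exactly the paper's. The gap is in the normalisation you choose. You want $f_\phi$ to vanish on \emph{all} unbounded components of $M\setminus\OP{supp}\phi$, i.e.\ you need $[\phi^*\la-\la]=0$ in $H^1_c(M;\B R)$, and you correctly flag this as the main obstacle; but it is not merely hard, it fails. The kernel of $H^1_c(M;\B R)\to H^1(M;\B R)$ is governed by the ends of $M$, not by $H^1(M;\B R)$, so the hypothesis $H^1(M;\B R)=0$ gives you nothing here when $M$ has more than one end. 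The paper points to a concrete example (from \cite[p.~78]{GK1}) on the cylinder $M=T^\vee S^1$: for $\phi(q,p)=(q+f(p),p)$ with $f$ compactly supported, $\phi^*\la-\la=pf'(p)\,dp$ and its primitive differs at the two ends by $-\int f(p)\,dp\neq 0$, so no compactly supported primitive exists. This $\phi$ lies in the identity component of $\Symp_c$, so no averaging or flux argument along the isotopy can rescue the claim, and the ``residual shift'' you hope to absorb into a $1$-cochain is not a per-component constant --- it varies with $\phi$ and is precisely the kind of quantity $\F G$ detects.

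The repair is small and is what the paper actually does: fix once and for all a \emph{single} end of $M$ and normalise $f_\phi$ to vanish on that end only (equivalently, in the paper's alternative remark, set $\F b(g)=\int_\gamma g^*\la-\la$ for a fixed ray $\gamma$ to infinity and check $\F G=\delta\F b$ directly). This normalisation always exists and is unique, and since every element of $\Symp_c(M,d\la)$ is the identity near the chosen end, the combination $f_\phi\circ\psi+f_\psi$ again vanishes there; hence it coincides with $f_{\phi\psi}$ on the nose, the defect vanishes identically, and the connecting-homomorphism argument you set up goes through verbatim. With that single change your proof is correct.
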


A subset $L\subset M$ is {\bf isotropic} if $i^*\la$, where $i\colon
L\to~M$ is the inclusion map, is a closed one-form. We say that
isotropic submanifold $L\subset M$ is {\bf exact isotropic}
if $i^*\la$ is exact.  This is to say that $[i^*\la] = 0$ in
$H^1(L;\B R)$.  It is always the case if $b_1(L)=0$.

Let $\Symp_L(M,d\la):=\{f\in \Symp(M,d\la)\,|\,f(L)=L\}$ 
be the group of symplectic diffeomorphisms
preserving the submanifold $L$.

\begin{theorem}\label{T:vanish-isotr}
Let $i\colon L\to M$ be the inclusion of a closed connected exact isotropic
submanifold.  Then 
$\F G$ restricted to the group $\Symp_L(M,d\la)$ represents the 
trivial cohomology class.
\end{theorem}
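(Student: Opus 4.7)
The plan is to follow the pattern of Theorem \ref{T:vanish-cpt}: on the subgroup $\Symp_L(M,d\la)$, construct a canonical selection of primitives $\widetilde H_f$ of $f^*\la-\la$ which is strictly multiplicative, and identify the scalar difference between $\widetilde H_f$ and the selection used by Ismagilov--Losik--Michor as an explicit trivialising $1$-cochain.

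The structural fact I use is this: since $H^1(M;\B R)=0$, every $f\in\Symp(M,d\la)$ admits a primitive $H_f\in C^\infty(M)$ of $f^*\la-\la$, unique up to an additive constant. Any $\B R$-valued selection $f\mapsto H_f$ fails the crossed-homomorphism identity $H_{fg}=H_g+g^*H_f$ by a constant $c(f,g)\in\B R$. This $c$ is a $2$-cocycle; $\F G$ arises this way from the selection of \cite{MR2270616}; and two selections differing by $\alpha\colon\Symp(M,d\la)\to\B R$ produce cocycles differing by $\delta\alpha$. Hence it suffices to exhibit, on $\Symp_L(M,d\la)$, one selection for which $c\equiv 0$.

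To produce such a selection, write $i^*\la=d\mu$ with $\mu\in C^\infty(L)$ and fix a basepoint $x_0\in L$. For $f\in\Symp_L(M,d\la)$ the invariance $f(L)=L$ yields $i^*(f^*\la-\la)=d(\mu\circ f|_L-\mu)$. Comparing with $d(i^*H_f)$ and using the connectedness of $L$ distinguishes a unique primitive $\widetilde H_f$ by the condition $i^*\widetilde H_f=\mu\circ f|_L-\mu$. Using the identity $(fg)^*\la-\la=g^*(f^*\la-\la)+(g^*\la-\la)$ together with $g(x_0)\in L$ for $g\in\Symp_L(M,d\la)$, a direct check shows that both $\widetilde H_{fg}$ and $\widetilde H_g+g^*\widetilde H_f$ evaluate to $\mu(fg(x_0))-\mu(x_0)$ at $x_0$; since they are primitives of the same closed form on the connected manifold $M$, they agree everywhere. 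Thus $f\mapsto\widetilde H_f$ is strictly multiplicative on $\Symp_L(M,d\la)$.

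The remaining step is bookkeeping: once the precise selection defining $\F G$ is recorded, the scalar difference $\phi(f):=H_f-\widetilde H_f\in\B R$ is a well-defined $1$-cochain on $\Symp_L(M,d\la)$ whose coboundary realises $\F G|_{\Symp_L(M,d\la)}=\delta\phi$. I expect aligning conventions with \cite{MR2270616}, and in particular making sure that the difference $H_f-\widetilde H_f$ is genuinely constant (not merely locally constant on $L$ via $i^*$), to be the main technical hurdle; everything else reduces to the elementary cocycle manipulation above.
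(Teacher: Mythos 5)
Your argument is correct, but it is not the route the paper takes for this particular theorem. The paper's proof is a two-line computation after two normalisations: replace $\la$ by $\la-dF$ so that $i^*\la=0$ (possible because $[i^*\la]=0$ and a primitive $F'$ of $i^*\la$ on $L$ extends to $M$), move the basepoint to $x\in L$, and then observe that $\F G_{x,\la}(g,h)=\int_x^{h(x)}g^*\la-\la$ vanishes identically, since $h(x)\in L$, the path of integration may be chosen inside the connected set $L$, and $i^*(g^*\la)=g^*(i^*\la)=0$ because $g$ preserves $L$; the independence of $[\F G]$ of these choices (Proposition \ref{P:definition}) then finishes the proof. You instead keep $\la$ and the basepoint fixed and trivialise the class through the connecting-homomorphism picture of Proposition \ref{P:bock}, by exhibiting a strictly multiplicative lift $f\mapsto\widetilde H_f$ of $\C K_\la$ on $\Symp_L(M,d\la)$, normalised along $L$ by $i^*\widetilde H_f=\mu\circ f|_L-\mu$; your verification at $x_0$ is sound, and the constancy of $\widetilde H_{fg}-\widetilde H_g-\widetilde H_f\circ g$ on connected $M$ does the rest. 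This is precisely the mechanism the paper uses for the compactly supported case (Theorem \ref{T:vanish-cpt}), so your proof has the virtue of making the two vanishing theorems uniform; the paper's version is shorter and gives the marginally stronger conclusion that the cocycle itself, not merely its class, is zero for a suitable choice of primitive and basepoint. Both arguments consume the hypotheses in the same way: connectedness of $L$ pins down a single normalising constant (respectively, provides a path in $L$), invariance $f(L)=L$ makes the normalisation coherent, and exactness of $i^*\la$ supplies $\mu$ (respectively, allows $i^*\la=0$). One small remark: the ``main technical hurdle'' you flag at the end is vacuous --- $H_f-\widetilde H_f$ is a difference of two primitives of $f^*\la-\la$ on the connected manifold $M$, hence automatically a genuine constant, with no issue of mere local constancy on $L$.
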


If $(M,d\la)$ is the universal cover of a closed symplectic manifold
$(X,\sigma)$ then the group $\Ham(X,\sigma)$ can be viewed as a
subgroup of $\Symp(M,d\la)$ (see page \pageref{SS:ham} for details). 

\begin{theorem}\label{T:ham}
The cocycle $\F G$ restricted to $\Ham(X.\sigma)$
represents the trivial cohomology class.
\end{theorem}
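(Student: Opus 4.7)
The plan is to produce an explicit $1$-cochain $c\colon \Ham(X,\sigma)\to\B R$ whose group-cohomology coboundary equals $\F G$ restricted to $\Ham(X,\sigma)$. The guiding idea is that on the Hamiltonian subgroup each element comes with a \emph{canonical} primitive of the closed $1$-form $\tilde\phi^{*}\la-\la$, and the Ismagilov--Losik--Michor construction of $\F G$ should reduce to a pure coboundary once those canonical primitives are plugged in.

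First I would unpack the embedding $\Ham(X,\sigma)\hookrightarrow\Symp(M,d\la)$ recalled on page~\pageref{SS:ham}: a Hamiltonian isotopy $\phi_t$ on $X$ generated by a mean-zero Hamiltonian $H_t\colon X\to\B R$ lifts to the isotopy $\tilde\phi_t$ on $M$ generated by $H_t\circ\pi$, where $\pi\colon M\to X$ is the covering projection. Writing $X_t$ for the lifted Hamiltonian vector field, Cartan's formula together with $\iota_{X_t}d\la=d(H_t\circ\pi)$ produces the explicit primitive
$$
U_{\tilde\phi}:=\int_0^{1}\tilde\phi_t^{*}\bigl(\iota_{X_t}\la+H_t\circ\pi\bigr)\,dt,\qquad dU_{\tilde\phi}=\tilde\phi^{*}\la-\la.
$$
A direct calculation (essentially the chain rule applied to the concatenated isotopy) yields the group-theoretic identity $U_{\tilde\phi\tilde\psi}=U_{\tilde\psi}+\tilde\psi^{*}U_{\tilde\phi}$ up to an additive real constant. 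Feeding these canonical primitives into the definition of $\F G$ shows that $\F G(\tilde\phi,\tilde\psi)$ is precisely this constant defect, so I would define $c(\phi)$ as a natural real-valued functional of $U_{\tilde\phi}$ — for instance the integral of $U_{\tilde\phi}$ over a fundamental domain against the Liouville measure, or its value at a distinguished basepoint — arranged so that $\delta c$ cancels the defect.

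The main obstacle will be controlling the ambiguity of $U_{\tilde\phi}$ in the choice of Hamiltonian path joining $\mathrm{id}$ to $\phi$: two such paths differ by a loop in $\Ham(X,\sigma)$, along which the primitive can a priori jump by a Calabi-type constant. I expect to tame this either (i)~by combining the mean-zero normalization of $H_t$ with the assumption $H^{1}(M;\B R)=0$ on the exact cover, which pins down $U_{\tilde\phi}$ up to a constant that is killed by $\delta$, or (ii)~by first proving the vanishing on the universal cover $\widetilde{\Ham}(X,\sigma)$, where every choice is canonical, and then descending to $\Ham(X,\sigma)$ by showing that the discrepancy between the lifted cocycle and $\F G$ itself is a coboundary. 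This path-ambiguity step is the delicate point on which the whole argument will turn; the remaining verification $\delta c = \F G|_{\Ham(X,\sigma)}$ is then a bookkeeping computation from the formula for $U_{\tilde\phi}$.
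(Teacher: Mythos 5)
Your overall architecture is the same as the paper's: build the canonical primitive $U_{\tilde\phi}=\int_0^1\tilde\phi_t^*(\iota_{X_t}\la+H_t\circ\pi)\,dt$ of $\tilde\phi^*\la-\la$ from the normalised Hamiltonian isotopy (this is exactly the paper's $F_1$, i.e.\ the action functional), observe that $\F G(\tilde\phi,\tilde\psi)=U_{\tilde\phi}(\tilde\psi(x))-U_{\tilde\phi}(x)$, and conclude that $\F G$ is the coboundary of $c(\phi)=U_{\tilde\phi}(x)$ once $U_{\tilde\phi}$ is shown to be independent of the chosen isotopy. The gap is that you never actually establish that independence, and neither of your two proposed workarounds can. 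The ambiguity in $U_{\tilde\phi}$ is governed by the value of the action functional on loops in $\Ham(X,\sigma)$, i.e.\ by a homomorphism $I\colon\pi_1(\Ham(X,\sigma))\to\B R$; this has nothing to do with $H^1(M;\B R)=0$ or with the mean-zero normalisation of $H_t$ (those only remove the ambiguity for a \emph{fixed} path), so option (i) does not pin anything down, and the resulting constant defect $C(\phi,\psi)$ is \emph{not} ``killed by $\delta$'': it is a genuine $\B R$-valued $2$-cocycle, cohomologous to $-\F G$, which is a coboundary if and only if the corresponding class vanishes. Option (ii) runs into the same wall: by the five-term exact sequence for $1\to\pi_1(\Ham)\to\widetilde{\Ham}\to\Ham\to1$, the kernel of $H^2(\Ham(X,\sigma);\B R)\to H^2(\widetilde{\Ham}(X,\sigma);\B R)$ is precisely the image of the transgression from $\OP{Hom}(\pi_1(\Ham(X,\sigma)),\B R)$, and the class of $\F G$ sits exactly in that image via $I$; so proving vanishing upstairs and ``descending'' again requires $I=0$.

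The paper closes this hole with two nontrivial external inputs: Schwarz's Lemma~3.3 of \cite{MR1755825} (the action functional of a Hamiltonian loop is a constant function of $x$ depending only on the homotopy class of the orbit) and the proof of Proposition~3.1(i) in McDuff \cite{MR2563682} (that constant is zero when the manifold is symplectically aspherical). These results use symplectic asphericity and holomorphic-curve/Floer-theoretic arguments in an essential way; they are not bookkeeping consequences of the normalisation. You would need to cite or reprove a statement of this strength to complete your argument. (A minor additional point you gloss over: even the well-definedness of the lift $\tilde f\in\Symp(M,d\la)$ of $f\in\Ham(X,\sigma)$ requires knowing that the evaluation map $\Ham(X,\sigma)\to X$ is trivial on $\pi_1$, cf.\ \cite[Corollary 9.1.2]{MR2045629}.)
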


\subsection{Hermitian symmetric spaces}\label{SS:hermitian}
If $M$ is a Hermitian symmetric space of noncompact type
(see~\cite{MR510552} or \cite[Chapter 3]{MR2205508} for definitions) 
then the connected component
$G:=\Isom^\circ(M)$ of the group of the isometries of the K\"ahler metric
admits a nontrivial bounded two-cocycle $\F K$ called the 
{\em K\"ahler cocycle}. It is defined by the integration of the K\"ahler form over
geodesic trilaterals.  More precisely, fix a reference point $x\in M$ and
define $$ \F K(g,h) := \int_{\triangle}\sigma, $$ where $\triangle\subset M$ is
a geodesic trilateral with vertices $x,g(x),gh(x)$ and $\sigma \in \Omega^2(M)$
is the K\"ahler form.

It is known \cite[Section 5.2]{MR2205508} that the K\"ahler cocycle is
bounded.  If $\Gamma \subset G$ is a uniform lattice (i.e. a discrete
subgroup such that the quotient $\Gamma\backslash G$
is compact) then the pull
back of the K\"ahler cocycle represents a nontrivial cohomology class
in $H^2(\Gamma;\B R)$. This class is equal to the class represented by
the K\"ahler form of the compact orbifold $X= \Gamma\backslash M$. A
detailed presentation can be found in Wienhard~\cite[Chapter 5]{MR2205508}.

The following result was proved in \cite[Theorem 5.1]{MR2270616}.

\begin{theorem}\label{T:hermitian}
Let $(M,d\la)$ be a Hermitian symmetric space of noncompact type and
let $G\subset \Symp(M,d\la)$ be the connected component of the group
of isometries of the K\"ahler metric.  Then the pullback of the
cocycle $\F G$ to $G$ is cohomologous to the
K\"ahler cocycle $\F K$.
\end{theorem}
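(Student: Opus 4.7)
The plan is to produce an explicit $1$-cochain $c \colon G \to \mathbb R$ whose coboundary $\delta c$ equals $\mathfrak K - \mathfrak G$ on $G$. Since $M$ is a simply connected Riemannian manifold of nonpositive sectional curvature it is CAT$(0)$, and for each $g \in G$ the geodesic segment $[x_0, g(x_0)]$ from $x_0$ to $g(x_0)$ is unique. The natural candidate is then
$$
c(g) := \int_{[x_0, g(x_0)]} \la.
$$

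First I apply Stokes' theorem to the geodesic trilateral $\triangle$ with vertices $x_0$, $g(x_0)$, $gh(x_0)$: since $\sigma = d\la$,
$$
\mathfrak K(g,h) \;=\; \int_{\triangle} \sigma \;=\; \int_{[x_0, g(x_0)]} \la \;+\; \int_{[g(x_0), gh(x_0)]} \la \;-\; \int_{[x_0, gh(x_0)]} \la.
$$
Next I use that $g \in G$ acts by isometries of the K\"ahler metric, so it maps the geodesic $[x_0, h(x_0)]$ onto $[g(x_0), gh(x_0)]$; the middle integral therefore becomes
$$
\int_{[x_0, h(x_0)]} g^* \la \;=\; \int_{[x_0, h(x_0)]} \la \;+\; \int_{[x_0, h(x_0)]} (g^*\la - \la).
$$
Substituting and regrouping, the three pure-$\la$ integrals combine into $c(g) + c(h) - c(gh) = (\delta c)(g,h)$ (for the trivial $G$-action on $\mathbb R$), while the residual term $\int_{[x_0, h(x_0)]} (g^*\la - \la)$ should reproduce $\mathfrak G(g,h)$. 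Concretely, choosing a primitive $\alpha_g$ with $d\alpha_g = g^*\la - \la$ and $\alpha_g(x_0) = 0$, the residual term evaluates to $\alpha_g(h(x_0))$, matching the standard Ismagilov--Losik--Michor formula.

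The argument is essentially an accounting exercise in Stokes' theorem; the only geometric inputs are uniqueness of geodesics in $M$ and the fact that $G$ acts by isometries of the K\"ahler metric (so it preserves the edges of the trilateral). The main delicate point is matching the sign convention and precise normalisation of $\mathfrak G$ used in the paper, so that the residual term is identified with $\mathfrak G(g,h)$ without a spurious additive correction; the structural computation above is robust to such choices.
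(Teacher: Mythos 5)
Your proof is correct and follows essentially the same route as the paper: the paper's Lemma \ref{L:trilateral} shows that $\F G + \delta\F k$, with $\F k(g)=\int_{\ell_{x,g(x)}}\la$, equals the integral of $d\la$ over the trilateral with sides $\ell_{x,gx}$, $g\ell_{x,hx}$, $-\ell_{x,ghx}$, and taking $\ell$ to be geodesics (so that $g\ell_{x,hx}$ is again a geodesic edge, by the isometry property you invoke) identifies this with $\F K(g,h)$. Your cochain $c$ is exactly the paper's $\F k$ and your Stokes computation is the same accounting, run in the opposite direction.
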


In Proposition \ref{P:primitive}, we observe that, under suitable
choices made, the cocycles $\F G$ and $\F K$ are in fact equal.

\subsection{Boundedness properties}
Theorem \ref{T:hermitian} shows that the restriction of $\F G$ to
a certain subgroup is a bounded two-cocycle. 
This is not the case in general.

\begin{theorem}\label{T:unbounded}
The two-cocycle $\F G$ is unbounded on $\Symp(M,d\la)$. Moreover,
if $(M,d\la)$ is the universal cover of a closed symplectic
manifold $(X,\sigma)$ then the restriction of $\F G$ to
$\Ham(X,\sigma)$ is unbounded.
\end{theorem}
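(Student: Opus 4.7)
The plan is to exhibit, in each setting, explicit sequences of pairs of diffeomorphisms on which $\F G$ grows without bound.

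I would begin by unpacking $\F G$ in terms of primitives. Since $H^1(M;\B R)=0$, every $f\in\Symp(M,d\la)$ admits a primitive $F_f\in C^\infty(M)$ with $dF_f=f^*\la-\la$. After fixing a basepoint $x_0\in M$ and normalising $F_h(x_0)=0$, the identity $(fg)^*\la-\la=d(F_f\circ g+F_g)$ forces $F_{fg}=F_f\circ g+F_g+c(f,g)$ for a constant $c(f,g)$, and the cocycle $\F G$ coincides up to coboundary with $c(f,g)=-F_f(g(x_0))$. This makes $\F G$ a local quantity, measurable in a Darboux chart around $x_0$.

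For unboundedness on $\Symp(M,d\la)$, I would choose a Darboux chart around $x_0$ in which $\la=\sum p_i\,dq_i$ and construct $f_n,g_n\in\Symp(M,d\la)$ that agree on a large compact neighbourhood of $x_0$ with translations by independent vectors of magnitude $n$ in the $p$- and $q$-directions respectively. In such coordinates a translation by $(0,b)$ has primitive $F_f(q,p)=b\cdot q$, so a direct computation gives $|F_{f_n}(g_n(x_0))|$ of order $n^2$, establishing unboundedness.

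For unboundedness on $\Ham(X,\sigma)$, Theorem~\ref{T:ham} says $\F G|_{\Ham}=\delta\tau$ for an explicit primitive $\tau$, so unboundedness of $\F G|_{\Ham}$ is equivalent to $\tau$ failing to be a quasi-morphism. To force such a failure I would exploit the non-trivial action of the (necessarily infinite) deck group $\pi_1(X)$ on $M$: fix a Hamiltonian diffeomorphism $\phi\in\Ham(X,\sigma)$ supported in a small Darboux ball $B\subset X$ and, for each $n$, produce $\psi_n\in\Ham(X,\sigma)$ whose lift $\tilde\psi_n$ carries a chosen preimage of $B$ in $M$ to one at distance $\asymp n$. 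The pair $(\phi,\psi_n\phi\psi_n^{-1})$ then has lifts whose primitive defect grows linearly in $n$, yielding the required unbounded defect of $\tau$.

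The principal obstacle is the Hamiltonian assertion: one must show that it is the \emph{defect} $\delta\tau$, not merely $\tau$ itself, that grows, since any quasi-morphism would have unbounded values but a bounded coboundary. The key mechanism is displacement of Hamiltonian supports onto distant sheets of the universal cover, which has no analogue purely on $X$; making this precise --- choosing $\psi_n$ realising arbitrary displacements on $M$ while staying inside the Hamiltonian group of $X$ --- is the technical heart of the argument.
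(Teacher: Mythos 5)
Your reduction of $\F G$ to the normalised primitive (so that $\F G(g,h)=\C K_\la(g)(h(x_0))$ when $\C K_\la(g)(x_0)=0$) is correct and is essentially Remark \ref{R:one-two}, but both constructions you build on it have real problems. For the first claim, symplectomorphisms agreeing with translations ``of magnitude $n$'' on a large neighbourhood of $x_0$ require $M$ to contain arbitrarily large Darboux balls; this is automatic for $\B R^{2n}$ but is not part of the hypotheses on $(M,d\la)$, and the theorem is stated for a general exact $M$ with $H^1(M;\B R)=0$. The paper's route needs no such largeness: fix a single $h$ with $h(x)\neq x$ and restrict the \emph{first} argument to the subgroup $\Symp(M,x,h(x))$ of maps fixing both $x$ and $h(x)$. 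On that subgroup $f\mapsto\F G(f,h)$ is a group homomorphism to $\B R$, equal by Stokes to the symplectic area swept between a path $\gamma$ from $x$ to $h(x)$ and its image $f(\gamma)$; a compactly supported perturbation in one small Darboux ball meeting $\gamma$ makes this nonzero, and then $\F G(f^n,h)=n\,\F G(f,h)$ is unbounded. The growth is entirely in the first argument, with $h$ fixed and everything happening in a chart of fixed size.

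The Hamiltonian half of your argument cannot work as designed. Your test pairs $(\phi,\psi_n\phi\psi_n^{-1})$ keep the first argument fixed, but Proposition \ref{P:semibounded} shows that when $X$ is symplectically hyperbolic the cocycle is semibounded on $\Ham(X,\sigma)$: $\sup_h|\F G(\phi,h)|<\infty$ for each fixed $\phi$. Such $X$ are covered by the theorem, so for them your sequence of values is bounded no matter how far $\tilde\psi_n$ displaces the preimages of $B$; the heuristic that displacement distance $\asymp n$ forces a defect $\asymp n$ has no justification, since $\C K_\la(\tilde\phi)$ is controlled by the action functional (Hamiltonian plus $\int\la$ over trajectories that stay in translates of a fixed ball) rather than by distance in $M$. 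The mechanism has to be growth in the first argument, and the local homomorphism construction above delivers it inside $\Ham(X,\sigma)$ as well: one takes $h$ and the perturbations $f$ to be lifts of compactly supported Hamiltonian diffeomorphisms of Darboux balls in $X$, which is exactly how the paper concludes the second assertion.
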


We investigate boundedness properties of $\F G$ in Section
\ref{S:boundedness}. We then apply these properties to prove
a theorem of Polterovich about Hamiltonian actions of
finitely generated groups on symplectically hyperbolic manifolds.
This and other applications
are presented in Section \ref{S:applications}.

\subsection*{Acknowledgements}
We warmly thank Dusa McDuff for explaining us the proof of
Theorem \ref{T:ham} and Example \ref{E:foliated} (4).
We thank Dieter Kotschick for drawing our attention to the
paper of Ismagilov, Losik and Michor.  And, last but not least,
anonymous referee for helpful remarks improving the final exposition.

%%%%%%%%%%%%%%%%%%%%%%%%%%%%%%%%%%%%%%%%%%%%%%%%%%%%%%%%%%%%%%%%%
\section{Definition of the cocycle}\label{S:definition}
%%%%%%%%%%%%%%%%%%%%%%%%%%%%%%%%%%%%%%%%%%%%%%%%%%%%%%%%%%%%%%%%%
We refer the reader to \cite{MR83k:20002}
for the standard definitions and facts about cohomology of groups.
In particular, we would make use of the fact that group
cohomology is the same as the cohomology of the classifying space of the group
when the group is endowed with discrete topology \cite[Section I.4]{MR83k:20002}.
A topological group $G$ considered with the discrete topology will 
be denoted $G^d$.

%Note that we consider covariant (right) representations.
%Of course, this could have been avoided, but for the price of 
%writing inverses trough
%the whole paper.

Let $(M,d\la)$ be an exact symplectic manifold with 
$H^1(M;\B R)=~0$. If $g\colon M\to~M$ is a symplectic diffeomorphism
then the one-form $g^*(\la) - \la$ is closed. Thus
the integral $\int_{\ell}g^*(\la) - \la $ 
depends only on the endpoints  of the path 
$\ell\colon [0,1]\to M$. In what follows we shall
denote this integral by
$$
\int_x^yg^*(\la) - \la,
$$
where $x=\ell(0)$ and $y=\ell(1)$.

Let $x\in M$ be a reference point. Following Ismagilov, Losik and
Michor \cite{MR2270616}, we define a two-cocycle $\F G_{x,\la}$ on
the discrete group $\Symp(M,d\la)$ of symplectic diffeomorphisms of
$(M,d\la)$ by
$$
\F G_{x,\la}(g,h):=\int_{x}^{h(x)}g^*\la - \la. 
%=\int_{g(\ell_h)-\ell_h}\la,
$$ 
We shall omit the subscripts when it does not lead to a
confusion.

The proof of the following proposition is straightforward,
cf. \cite[Theorem 3.1]{MR2270616}.

\begin{proposition}\label{P:definition}
The map $\F G$ satisfies each of the following conditions:
\begin{enumerate}
\item 
$\F G$ is a two-cocycle on $\Symp(M,d\la)$,
\item
the cohomology class 
$[\F G]\in H^2(\Symp(M,d\la);\B R)$
does not depend on the choice of $x\in M$,
\item
the cohomology class $[\F G]$ does not depend of the choice of primitive~$\la$.
\end{enumerate}\qed
\end{proposition}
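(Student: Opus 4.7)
For part (1), I would verify the 2-cocycle identity $\F G(h,k)-\F G(gh,k)+\F G(g,hk)-\F G(g,h)=0$ by direct substitution. The two terms involving $k$ combine as
$$
\int_x^{k(x)}\bigl(h^*\la-(gh)^*\la\bigr)=-\int_x^{k(x)}h^*(g^*\la-\la)=-\int_{h(x)}^{hk(x)}(g^*\la-\la),
$$
where in the last equality I use the pullback change-of-variable formula $\int_\ell h^*\omega=\int_{h\circ\ell}\omega$, applied to the closed 1-form $\omega=g^*\la-\la$ (closedness, together with $H^1(M;\B R)=0$, is what makes all such integrals depend only on endpoints). The remaining two terms involving $g$ are $\int_x^{hk(x)}(g^*\la-\la)-\int_x^{h(x)}(g^*\la-\la)=\int_{h(x)}^{hk(x)}(g^*\la-\la)$, which cancels against the above. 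This is the only real computation, and the main subtlety is keeping track of the pullback identity $(gh)^*=h^*g^*$ in the right direction; nothing deeper is needed.

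For part (2), the strategy is to exhibit an explicit primitive. Define the 1-cochain $\beta(g):=\int_x^y(g^*\la-\la)$ for a second basepoint $y$. I would compute $\delta\beta(g,h)=\beta(h)-\beta(gh)+\beta(g)$, use the same identity $h^*\la-(gh)^*\la=-h^*(g^*\la-\la)$ and then the change-of-variable formula to rewrite $\int_x^y h^*(g^*\la-\la)=\int_{h(x)}^{h(y)}(g^*\la-\la)$. On the other hand, closing up the quadrilateral $x\to h(x)\to h(y)\to y\to x$ with the closed form $g^*\la-\la$ (which is permissible since $H^1(M;\B R)=0$, so the integral around any loop vanishes) gives
$$
\F G_{y,\la}(g,h)-\F G_{x,\la}(g,h)=\int_{h(x)}^{h(y)}(g^*\la-\la)-\int_x^y(g^*\la-\la).
$$
Comparing the two calculations shows $\F G_{y,\la}-\F G_{x,\la}=-\delta\beta$, hence the classes agree.

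For part (3), any other primitive has the form $\la'=\la+df$ for some smooth function $f$; the hypothesis $H^1(M;\B R)=0$ is precisely what guarantees that an arbitrary second primitive differs from $\la$ by an exact, not merely closed, 1-form. A direct computation gives
$$
\F G_{x,\la'}(g,h)-\F G_{x,\la}(g,h)=\int_x^{h(x)}d(f\circ g-f)=f(g(h(x)))-f(h(x))-f(g(x))+f(x).
$$
Setting $\gamma(g):=f(g(x))-f(x)$, one checks at once that the right-hand side equals $-\delta\gamma(g,h)$, which again is a coboundary.

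The approach is essentially bookkeeping; the single recurring technical ingredient is the pair of identities ``pullback reverses composition order'' and ``$\int_\ell h^*\omega=\int_{h\circ\ell}\omega$,'' combined with the standing assumption $H^1(M;\B R)=0$ that turns closed 1-forms into path-independent integrals. The part most prone to sign errors is the cocycle identity itself in (1), where one must be careful that $(gh)^*\la=h^*g^*\la$ and not the other way round; beyond that, there is no genuine obstacle.
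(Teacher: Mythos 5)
Your proposal is correct: all three verifications (the cocycle identity via $(gh)^*=h^*g^*$ and path-independence of integrals of the closed form $g^*\la-\la$, the coboundary $\beta(g)=\int_x^y(g^*\la-\la)$ for the change of basepoint, and the coboundary $\gamma(g)=f(g(x))-f(x)$ for the change of primitive $\la'=\la+df$) check out, including the signs. The paper declares this proposition straightforward and omits the argument entirely (deferring to Ismagilov--Losik--Michor), so your computation is exactly the expected direct verification being left to the reader; it is also consistent with the paper's later reinterpretation of $\F G$ via the one-cocycle $\C K_\la$ in Proposition \ref{P:bock}.
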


\subsection{An alternative approach via a one-cocycle}

Let $g\in \Symp(M,d\la)$.  Recall that $g^*\la - \la$ is exact since
$g$ preserves $d\la$ and $b_1(M)$ is assumed to vanish.
Therefore there exists a function $\C K_{\la}(g)$
unique up to a constant (as we have assumed that $M$ is connected)
such that
$$
d\C K_{\la}(g) = g^*\la - \la.
$$
The map
$$
\C K_{\la}\colon \Symp(M,d\la)\to C^{\infty}(M)/\B R
$$
is a one-cocycle on the group of symplectic diffeomorphisms
of $(M,d\la)$ with values in the right representation of smooth
functions on $M$ modulo the constants. The action of a
diffeomorphism on a function is by the composition.
That is, the map $\C K_{\la}$ satisfies the following identity
\begin{equation}\label{eq:cocycle}
\C K_{\la}(gh) = \C K_{\la}(g)\circ h + \C K_{\la}(h),
\end{equation}
which is straightforward to check. This cocycle has been
investigated by the authors in \cite{GK1}.

Consider the following short exact sequence of 
$\Symp(M,d\la)$-rep\-re\-sen\-ta\-tions 
$$
0\to\B R\to C^{\infty}(M) \to C^{\infty}(M)/\B R\to0.
$$ 

\begin{proposition}\label{P:bock}
Consider the connecting homomorphism 
$$
\delta \colon H^1(\Symp(M,d\la),C^{\infty}(M)/\B R)\to
H^2(\Symp(M,d\la);\B R).
$$
corresponding to the above extension of representations
(see Brown \cite[III.6]{MR83k:20002} for definition).
Then $\delta [\C K_{\la}]=[\F G]$.
\end{proposition}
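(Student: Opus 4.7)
The plan is to compute $\delta[\C K_\lambda]$ directly from the snake-lemma description of the connecting map on group cohomology and observe that, with the natural lift, the resulting cocycle is literally $\F G$.

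Recall that for a short exact sequence $0\to A\to B\to C\to 0$ of (right) $G$-modules, the connecting homomorphism on cohomology is computed as follows: given a $1$-cocycle $c\colon G\to C$, one picks any set-theoretic lift $\widetilde c\colon G\to B$; the coboundary $(d\widetilde c)(g,h):=\widetilde c(g)\cdot h-\widetilde c(gh)+\widetilde c(h)$ maps to zero in $C$ because $c$ is a cocycle, hence lands in $A$, and the resulting $A$-valued $2$-cocycle represents $\delta[c]$. So my first step is to choose a convenient lift of $\C K_\lambda$ to $C^\infty(M)$.

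Fixing the reference point $x\in M$ that appears in the definition of $\F G=\F G_{x,\lambda}$, I normalise $\widetilde{\C K}_\lambda(g)$ by requiring $\widetilde{\C K}_\lambda(g)(x)=0$. Since $d\widetilde{\C K}_\lambda(g)=g^*\lambda-\lambda$ is closed and $H^1(M;\B R)=0$, this uniquely determines a smooth function, and explicitly
$$
\widetilde{\C K}_\lambda(g)(y)\;=\;\int_x^y g^*\lambda-\lambda,
$$
with the integral path-independent. This is a set-theoretic section of $C^\infty(M)\to C^\infty(M)/\B R$ lifting $\C K_\lambda$.

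Next I compute the coboundary. Using that the $\Symp(M,d\lambda)$-action on $C^\infty(M)$ is by composition on the right,
$$
(d\widetilde{\C K}_\lambda)(g,h)\;=\;\widetilde{\C K}_\lambda(g)\circ h-\widetilde{\C K}_\lambda(gh)+\widetilde{\C K}_\lambda(h).
$$
The cocycle identity \eqref{eq:cocycle} guarantees that this is a constant function; to name the constant I evaluate at $y=x$. By construction $\widetilde{\C K}_\lambda(gh)(x)=\widetilde{\C K}_\lambda(h)(x)=0$, so
$$
(d\widetilde{\C K}_\lambda)(g,h)\;=\;\widetilde{\C K}_\lambda(g)(h(x))\;=\;\int_x^{h(x)}g^*\lambda-\lambda\;=\;\F G(g,h),
$$
which is exactly the defining formula of $\F G_{x,\lambda}$. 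This identifies $\delta[\C K_\lambda]$ with $[\F G]$ on the nose (not merely up to coboundary), which is the claim.

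The only real subtlety is keeping the action conventions straight so that the signs in the coboundary formula and in \eqref{eq:cocycle} match; once the right action by composition is fixed, there is essentially no computation left, since the reference-point normalisation is engineered precisely so that the constant picked out by the connecting map is $\F G(g,h)$.
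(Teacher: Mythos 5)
Your proof is correct and follows essentially the same route as the paper's: both choose the lift of $\C K_{\la}$ normalised to vanish at the reference point $x$, compute the coboundary by the standard description of the connecting homomorphism, and evaluate the resulting constant at $x$ to recover the defining formula of $\F G_{x,\la}$ exactly. No differences worth noting.
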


\begin{proof}
We start with choosing a lift
$\widetilde {\C K}_{\la}\colon\Symp(M,d\la)\to C^\infty(M)$
of  $\C K_{\la}$.
This may be obtained by setting $\widetilde{\C K}_{\la}(g)(x)=~0$.
It follows from the identity (\ref{eq:cocycle}) that  the coboundary
$$
\delta \widetilde {\C K}_{\la}(g,h)\colon=
\widetilde {\C K}_{\la}(g)\circ h-\widetilde {\C K}_{\la}(gh)+\widetilde {\C K}_{\la}(h)
$$
belongs to $\B R$, i.e. it is a constant function.
Therefore, without loss of generality, we can evaluate it at point~$x$:
\begin{eqnarray*}
\delta \widetilde  {\C K}_{\la}(g,h)(x)&=&
\widetilde {\C K}_{\la}(g)(hx)-\widetilde {\C K}_{\la}(gh)(x)+\widetilde {\C K}_{\la}(h)(x)\\
&=&\widetilde {\C K}_{\la}(g)(hx)\\
&=&\widetilde {\C K}_{\la}(g)(hx) - \widetilde {\C K}_{\la}(g)(x)\\
&=& \int_x^{h(x)} d{\C K}_{\la}\\
&=& \int_x^{h(x)} g^*\la - \la \\
&=&\F G_{x,\la}(g,h).\\
\end{eqnarray*}
\end{proof}

\begin{remark}\label{R:one-two} 
Notice that
$\F G_{x,\la}(g,h) = \C K_{\la}(g)(h(x)) - \C K_{\la}(g)(x).$
\end{remark}

%%%%%%%%%%%%%%%%%%%%%%%%%%%%%%%%%%%%%%%%%%%%%%%%%%%%%%%%%%%%%%%%
\section{Proofs of the results}\label{S:proofs}
%%%%%%%%%%%%%%%%%%%%%%%%%%%%%%%%%%%%%%%%%%%%%%%%%%%%%%%%%%%%%%%%

Let us choose a path $\ell_{x,y}\colon [0,1]\to M$ from the
basepoint $x\in M$ to a point $y\in M$. Let
$-\ell_{x,y}(t):=\ell_{x,y}(1-t)$.
Let $\F k\in C^1(\Symp(M,d\la);\B R)$ be a cochain defined by 
$$\F k(g):= \int_{\ell_{x,g(x)}}\la.$$
%The boundary of this cochain can be easily calculated
%as follows.
%$$
%(\delta \F k)(g,h)=\F k(g) - \F k(gh) + \F k(h)
%=\int_{\ell_{x,gx}}\la - \int_{\ell_{x,ghx}}\la\right)
%+ \int_{\ell_{x,hx}}\la\right)
%$$

\begin{lemma}\label{L:trilateral}
Let $\triangle\subset M$ be a trilateral with sides
$\ell_{x,g(x)}$, $g\ell_{x,hx}$, $-\ell_{x,ghx}$. Then
$$
(\F G + \delta \F k)(g,h) = \int_{\triangle}d\la.
$$
\end{lemma}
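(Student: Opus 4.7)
The plan is to rewrite both terms on the left-hand side as line integrals of $\lambda$ over paths that together form the boundary of $\triangle$, and then to apply Stokes' theorem.

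First I would expand the coboundary of the $1$-cochain $\F k$. Since the coefficients are the trivial $\Symp(M,d\la)$-module $\B R$,
\begin{equation*}
(\delta \F k)(g,h) = \F k(h) - \F k(gh) + \F k(g) = \int_{\ell_{x,h(x)}}\la - \int_{\ell_{x,ghx}}\la + \int_{\ell_{x,g(x)}}\la.
\end{equation*}
Next I would rewrite $\F G(g,h)$ using a particular representative path. Because $g^*\la - \la$ is closed and $H^1(M;\B R)=0$, the integral defining $\F G(g,h)$ is path-independent, so we may evaluate it along $\ell_{x,h(x)}$:
\begin{equation*}
\F G(g,h) = \int_{\ell_{x,h(x)}} g^*\la - \int_{\ell_{x,h(x)}} \la = \int_{g\ell_{x,hx}} \la - \int_{\ell_{x,h(x)}} \la,
\end{equation*}
where the last equality is the usual change-of-variables for pullbacks.

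Adding the two expressions, the two copies of $\int_{\ell_{x,h(x)}}\la$ cancel and we are left with
\begin{equation*}
(\F G + \delta\F k)(g,h) = \int_{\ell_{x,g(x)}}\la + \int_{g\ell_{x,hx}}\la + \int_{-\ell_{x,ghx}}\la = \int_{\partial \triangle}\la,
\end{equation*}
since the three paths $\ell_{x,g(x)}$, $g\ell_{x,hx}$, $-\ell_{x,ghx}$ are precisely the consecutive oriented sides of $\triangle$ and form a loop (their endpoints are $x \to g(x) \to gh(x) \to x$). Stokes' theorem then gives $\int_{\partial\triangle}\la = \int_\triangle d\la$, which completes the argument.

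The only subtle point, rather than an obstacle, is bookkeeping the orientations of the three sides so that they really do form the boundary of $\triangle$ with a consistent orientation; once the endpoints are lined up as above this is automatic. Note also that the existence of such a filling $\triangle$ is being assumed as part of the hypothesis, so we do not need to address it here.
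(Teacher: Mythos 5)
Your proof is correct and follows essentially the same route as the paper's: expand $\delta\F k$, rewrite $\F G(g,h)$ as $\int_{g\ell_{x,hx}}\la-\int_{\ell_{x,hx}}\la$, cancel, and apply Stokes. The paper just presents this as a single displayed computation, while you spell out the path-independence and change-of-variables steps; the content is identical.
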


\begin{proof}
It is the following direct calculation.
\begin{eqnarray*}
(\F G + \delta \F k)(g,h) 
&=&\left(\int_{g\ell_{x,hx}}\la - \int_{\ell_{x,hx}} \la\right)
+\left( \int_{\ell _{x,gx}}\la - \int_{\ell_{x,ghx}}\la + \int_{\ell_{x,hx}}\la\right)\\
&=&\int_{g\ell_{x,hx}}\la + \int_{\ell_{x,gx}}\la - \int_{\ell_{x,ghx}}\la \\
&=&\int_{\triangle}d\la. 
\end{eqnarray*}
\end{proof}

\begin{remark}\label{R:paths}
Notice that the choice of paths cannot be made continuous if $M$
is not contractible. That is, the path fibration $PM\to M\times M$
defined by $\ell \mapsto (\ell(0),\ell(1))$ does not admit a
continuous section in general.
\end{remark}

\subsection{Proof of Theorem \ref{T:sas}.}  
Let $M\to X$ be the universal cover. Consider the composition 
$X \to B\pi_1(X)\to B\Symp(M,d\la)^d$ of the map classifying the
universal cover followed by the map induced by the inclusion
$\pi_1(X)\subset \Symp(M,d\la)$ as the deck transformations. 
The strategy is
to show that pullback of the cocycle $\F G$ with respect to this map
represents the class $[\sigma]$ of the symplectic form. In fact, we
shall prove the following more general result.

\begin{theorem}\label{T:better-sas}
Let $(M,d\la)$ be a connected regular cover of a closed 
symplectic $2n$-manifold
$(X,\sigma)$. Let $\Gamma \subset \Symp(M,d\la)$ denote the deck
transformation group.  Suppose that $H^1(M;\B R)=0$. Then the
pullback of the class $[\F G]$ by the homomorphism induced by the
composition $X \to B\Gamma \to B\Symp(M,d\la)^d$ is equal to the class
represented by the symplectic form~$\sigma$. In particular, 
$[\F G]^n \neq 0$.
\end{theorem}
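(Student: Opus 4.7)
It suffices to prove $f^{*}[\F G] = [\sigma]$ in $H^{2}(X;\B R)$, where $f$ denotes the composition $X \to B\Gamma \to B\Symp(M,d\la)^{d}$. Given this, $(f^{*}[\F G])^{n} = [\sigma^{n}]$ is nonzero because $\sigma^{n}$ is a volume form on the closed manifold $X$, and hence $[\F G]^{n}\neq 0$.

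To identify the pullback, I would work at the \v{C}ech level. Choose a good cover $\{U_{i}\}$ of $X$ in which each $U_{i}$ is evenly covered by $\pi\colon M \to X$, and select a lift $\tilde U_{i}\subset M$ of every $U_{i}$. Above any point of $U_{i}\cap U_{j}$ the lifts $\tilde U_{i}$ and $\tilde U_{j}$ differ by a unique deck transformation $g_{ij}\in\Gamma$ (constant on each connected component of the overlap), and the family $\{g_{ij}\}$ is a \v{C}ech $1$-cocycle satisfying $g_{ij}g_{jk}=g_{ik}$. This cocycle classifies the cover $M\to X$ and therefore represents $f$ at the \v{C}ech level; consequently $f^{*}[\F G]$ is represented by the \v{C}ech $2$-cocycle $(i,j,k)\mapsto \F G(g_{ij},g_{jk})$.

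For $[\sigma]$, restrict $\la$ to each $\tilde U_{i}$ and transfer it via the diffeomorphism $\tilde U_{i}\to U_{i}$ to obtain $\la_{i}\in\Omega^{1}(U_{i})$ with $d\la_{i}=\sigma|_{U_{i}}$. Fix lifts $\widetilde{\C K}_{\la}(g)\in C^{\infty}(M)$ of $\C K_{\la}(g)$ normalised by $\widetilde{\C K}_{\la}(g)(x)=0$, as in the proof of Proposition \ref{P:bock}. Since $g_{ij}^{*}\la-\la = d\widetilde{\C K}_{\la}(g_{ij})$, on $U_{i}\cap U_{j}$ one has $\la_{i}-\la_{j}=df_{ij}$ with $f_{ij}(p):=\widetilde{\C K}_{\la}(g_{ij})(\tilde p_{j})$, where $\tilde p_{j}$ is the lift of $p$ lying in $\tilde U_{j}$. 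The locally constant \v{C}ech $2$-cocycle $c_{ijk}:=f_{ij}-f_{ik}+f_{jk}$ then represents $[\sigma]$ by the standard \v{C}ech--de Rham recipe. Substituting the identity
$$\widetilde{\C K}_{\la}(g_{ij})\circ g_{jk} = \widetilde{\C K}_{\la}(g_{ik}) - \widetilde{\C K}_{\la}(g_{jk}) + \F G(g_{ij},g_{jk}),$$
which is exactly the computation in the proof of Proposition \ref{P:bock}, collapses $c_{ijk}$ to the constant $\F G(g_{ij},g_{jk})$, matching the representative of $f^{*}[\F G]$ found above. Hence $f^{*}[\F G]=[\sigma]$, and the theorem follows.

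The main obstacle is purely bookkeeping: aligning the convention for the direction of $g_{ij}$ (which lift is acted on by which), deciding which lift to use when evaluating $\widetilde{\C K}_{\la}(g_{ij})$ to define $f_{ij}$, and pinning down the sign of the constant produced at a triple overlap so that it emerges as $\F G(g_{ij},g_{jk})$ and not, say, $\F G(g_{jk},g_{ij})$ or its negative. Once the conventions are fixed the proof is a mechanical substitution of the Proposition \ref{P:bock} identity into a standard \v{C}ech--de Rham representative of $[\sigma]$.
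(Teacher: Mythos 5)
Your argument is correct, but it runs along a genuinely different track from the paper's. The paper builds $B\Gamma$ as Milnor's simplicial set, puts a single-vertex {\sc cw}-structure on $X$ (so that the vertices of the induced structure on $M$ are identified with $\Gamma$), makes the classifying map cellular on the two-skeleton, and evaluates the corrected cocycle $\F G+\delta\F k$ of Lemma \ref{L:trilateral} on a lifted two-cell $\Delta_x$ with vertices $x,gx,ghx$; the answer is $\int_{\Delta_x}d\la=\int_\Delta\sigma$, which identifies the pullback with the cellular representative of $[\sigma]$. You instead classify the cover by a $\Gamma$-valued \v{C}ech cocycle $\{g_{ij}\}$ on a good cover, represent $f^*[\F G]$ by $\F G(g_{ij},g_{jk})$, and run the \v{C}ech--de Rham zig-zag for $[\sigma]$ using the local primitives $\la_i=s_i^*\la$; the triple-overlap constant collapses to $\F G(g_{ij},g_{jk})$ precisely by the coboundary identity from Proposition \ref{P:bock}. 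Your computation checks out: with $s_i=g_{ij}s_j$ one gets $g_{ik}=g_{ij}g_{jk}$, $\la_i-\la_j=d\bigl(\widetilde{\C K}_\la(g_{ij})\circ s_j\bigr)$, and $c_{ijk}=\delta\widetilde{\C K}_\la(g_{ij},g_{jk})=\F G(g_{ij},g_{jk})$. What each approach buys: the paper's route avoids invoking the comparison between the bar-resolution cocycle and its \v{C}ech representative for a flat bundle (a standard fact that you use without proof and which carries exactly the ordering/orientation conventions you flag), and it makes the geometric content---symplectic areas of lifted cells---visible; your route avoids the auxiliary choice of paths and the correction cochain $\F k$ entirely, leaning instead on the connecting-homomorphism picture $\delta[\C K_\la]=[\F G]$, which arguably makes the appearance of $[\sigma]$ more mechanical. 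Both are complete modulo the sign and ordering conventions you explicitly acknowledge.
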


\begin{proof}
The classifying space $B \Gamma$ is constructed as a 
realisation of a simplicial set according to Milnor
\cite{MR0077122,MR0077932}. 
In this simplicial complex the set of $n$-simplices is
identified with $G^n$. This gives an identification of
the cochain complex for the group cohomology and
the simplicial cochain complex
(see \cite[Section II.1.1.B]{MR2001a:22001} for details).

Let us choose a {\sc cw}-complex structure with a 
single vertex in $X$. It is always possible due 
to a standard argument (see for example 
\cite[Proposition 4.2.13]{MR2000h:57038}
and the subsequent discussion). Such a structure
induces a {\sc cw}-structure on the covering $M$.
Notice that the vertices (zero-cells) of this
induced structure can be identified with $\Gamma$.
Let $x\in M$ be a reference vertex.

With the above choice the classifying map 
$c\colon X\to B\Gamma$ is cellular on the one-skeleton
and after an appropriate subdivision of $X$ it can be
made cellular on the two-skeleton. Here we consider
the simplicial structure on $B \Gamma$ as
a {\sc cw}-complex.

Let $\Delta$ be an oriented two-cell of $X$. Its image
$c(\Delta)$ with respect to the classifying map
is a two-simplex in $B \Gamma$ and hence a pair
of elements from $\Gamma$. To find these elements
consider the lift of $\Delta$ to $M$ passing
through the reference vertex $x$. Let $\Delta_x$ 
denote this lift. The vertices of $\Delta_x$
are of the form $x,gx,ghx$ for some $g,h\in \Gamma$.
Thus $c(\Delta)$ is identified with the pair
$(g,h)\in \Gamma\times\Gamma$.

Let us represent the cohomology class $[\F G]$ by the
cocycle $\F G+\delta \F k$ as in Lemma \ref{L:trilateral}.
Then we pull it back to $\Gamma$ and consider it as
a {\sc cw}-cocycle, pull it back to $X$ and evaluate
on a two-cell $\Delta$.
\begin{eqnarray*}
\langle c^*(\F G+\delta \F k),\Delta\rangle 
&=& \langle \F G+\delta \F k,c(\Delta)\rangle\\
&=&(\F G+\delta \F k)(g,h)\\
&=& \int_{\Delta_x}d\la\\
&=& \int_{\Delta}\sigma
\end{eqnarray*}
That is, the pull back of the cocycle $\F G +\delta\F k$ to $X$ is
a cocycle defined by the integration of the symplectic
form $\sigma$. Thus it represents the {\sc cw}-cohomology 
class corresponding to the cohomology class
of $\sigma$ under the de Rham isomorphism.
In particular, since $[\sigma]^n\neq 0$, we get that
$[\F G]^n\neq 0$.
\end{proof}

\begin{question}
Are the higher powers of the cohomology class 
$[\F G]$ nonzero?
\end{question}

\subsection{Proof of Theorem \ref{T:hermitian} 
{(cf. \cite[Section 4.2]{MR2270616})}}
Recall that we need to show that the pullback of the cocycle $\F G$
with respect to the inclusion $G=\Isom^\circ(M)\subset \Symp(M,d\la)$
is cohomologous to the K\"ahler cocycle~$\F K$.
It immediately follows from Lemma \ref{L:trilateral}.
\qed

As we pointed out in the introduction a stronger statement is true.

\begin{proposition}\label{P:primitive}
Let $x\in M$ be a reference point.
There exists a primitive $\la$ such that
the cocycle $\F G_{x,\la}$ is equal to the K\"ahler cocycle $\F K$.
\qed
\end{proposition}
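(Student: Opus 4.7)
The plan is to combine Lemma \ref{L:trilateral} with a careful choice of primitive adapted to the geometry at $x$. Because $M$ is a Hermitian symmetric space of noncompact type, the exponential map $\exp_x\colon T_xM\to M$ is a diffeomorphism (nonpositive curvature, Cartan--Hadamard), so each ordered pair of points $(x,y)$ is joined by a unique geodesic and the geodesic triangle with vertices $x,g(x),gh(x)$ is well defined.

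First I would take the paths $\ell_{x,y}$ used in the definition of the cochain $\F k$ to be these unique geodesics. Since every element $g\in G=\Isom^\circ(M)$ is an isometry, the path $g\ell_{x,hx}$ is itself the geodesic from $g(x)$ to $gh(x)$. Consequently the trilateral $\triangle$ appearing in Lemma \ref{L:trilateral} coincides with the geodesic trilateral used to define the K\"ahler cocycle, so the lemma yields
\[
(\F G_{x,\la}+\delta\F k)(g,h)=\int_{\triangle}d\la=\int_{\triangle}\sigma=\F K(g,h).
\]

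It therefore suffices to produce a primitive $\la$ of $\sigma$ for which the cochain $\F k$ is identically zero, i.e.\ for which $\int_{\gamma}\la=0$ along every geodesic ray $\gamma$ emanating from $x$. To build such a primitive, pull $\sigma$ back via $\exp_x$ to a closed form $\widetilde\sigma$ on $T_xM\cong\B R^{2n}$ and apply the standard radial homotopy operator from Poincar\'e's lemma:
\[
\widetilde\la_v(w)=\int_0^1 \widetilde\sigma_{tv}(v,w)\,dt.
\]
For the radial curve $\gamma(t)=tv$ one has $\widetilde\la_{tv}(v)=t\int_0^1\widetilde\sigma_{stv}(v,v)\,ds=0$ by the antisymmetry of $\sigma$, so $\int_{\gamma}\widetilde\la=0$. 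Pushing $\widetilde\la$ forward by $\exp_x$ gives the desired primitive $\la$ on $M$. With this choice $\F k\equiv 0$, hence $\F G_{x,\la}=\F K$ on the nose.

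The only genuine content is the observation that on a manifold diffeomorphic to $\B R^{2n}$ through the exponential map at a point one can always normalise the primitive so as to kill the line integrals along geodesics from $x$; once that is done, Lemma \ref{L:trilateral} delivers the identification automatically. The main obstacle I would anticipate is purely bookkeeping: verifying that the radial-gauge primitive constructed via $\exp_x$ really makes $\int_{\ell_{x,g(x)}}\la=0$ for every $g\in G$ (and not merely for geodesics parametrised by arc length in a single fixed chart), but antisymmetry of $\sigma$ combined with the reparametrisation invariance of line integrals handles this uniformly.
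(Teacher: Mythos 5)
Your argument is correct and its core coincides with the paper's: both proofs reduce the proposition to producing a primitive $\la$ of the K\"ahler form whose integral vanishes along every geodesic emanating from $x$, after which the identity $\F G_{x,\la}=\F K$ follows from Stokes (your route via Lemma \ref{L:trilateral} with $\F k\equiv 0$, the paper's via a direct two-line computation of each cocycle, using that isometries carry the geodesic $\ell_{x,hx}$ to $\ell_{gx,ghx}$). Where you differ is in how that ``radial gauge'' primitive is manufactured. The paper stays inside K\"ahler geometry: it writes $\la=-Jd\varphi$, averages the K\"ahler potential $\varphi$ over the compact stabiliser of $x$ to make it radial, observes that the Liouville field $L$ (with $i_Ld\la=\la$) is then the gradient of $\varphi$, so its flow lines are the geodesics from $x$, and concludes $\int_{\ell_{x,y}}\la=0$ from $i_L\la=(i_L)^2d\la=0$. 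You instead pull $\sigma$ back through the Cartan--Hadamard diffeomorphism $\exp_x$ and apply the radial homotopy operator of the Poincar\'e lemma; antisymmetry of $\sigma$ kills the radial contraction. Your construction is more elementary and more general (it would work on any exact symplectic Cartan--Hadamard manifold on which the relevant group acts by isometries), whereas the paper's identifies the primitive concretely in terms of the K\"ahler potential and the Liouville flow. One small correction: the homotopy operator for a two-form carries a factor of $t$, i.e.\ $\widetilde\la_v(w)=\int_0^1 t\,\widetilde\sigma_{tv}(v,w)\,dt$; without it $d\widetilde\la\neq\widetilde\sigma$ in general. The property you actually use, $\widetilde\la_v(v)=0$ by antisymmetry, survives the correction, so the argument is unaffected.
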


\begin{proof}
A primitive $\la$ can be written as $\la=-Jd\varphi$ where $J$
is the complex structure on $M$.  The function $\varphi$ is called the
{\em K\"ahler potential}.  Averaging $\varphi$ with respect to the
(compact) stabiliser of a point $x$, one can choose $\varphi$ to be
radial (as the stabiliser of $x$ in $G$ acts transitively on the unit
tangent sphere at $x$), i.e. $\varphi=u(\OP{dist}(\cdot,x))$ for a
suitable function $u\colon [0,\infty)\to \B R$.

Let $L$ be the Liouville vector field defined by $i_Ld\la=\la$.
By definition of~$\la$, the vector field $L$ is the metric
gradient of the function $\varphi$.  Recall that the spheres around
$x$ (the level sets of the function $\varphi$) are orthogonal to the
geodesics from $x$.  Therefore the flow  of $-L$ contracts $M$ to
the unique zero $x$ of $L$ along the geodesics.  Let $\ell_{x,y}$ denote
the geodesic between $x$ and $y$.

A flow line of the Liouville vector field $L$ is $\la$-null, as
$i_L\la=(i_L)^2d\la=0$.  Thus $\int_{\ell_{x,y}}\la=0$,  
for every $y$.  Therefore
$$
\F K(g,h)=
\int_{\ell_{x,gx}}\la
+\int_{\ell_{gx,ghx}}\la
-\int_{\ell_{x,ghx}}\la
=\int_{\ell_{gx,ghx}}\la,
$$
and
$$
\F G_{x,\la}(g,h)=
\int_{\ell_{gx,ghx}}\la
-\int_{\ell_{x,hx}}\la
=\int_{\ell_{gx,ghx}}\la.
$$
\end{proof}

\begin{example}
If $M=\OP{U}(1,1)/\OP{SO}(2)$ is a complex hyperbolic line then
the function $u\colon [0,\infty)\to \B R$ 
from the first paragraph of the above proof
is defined by $u(r)=\log(\cosh(r)+1)$.
\hfill $\diamondsuit$
\end{example}

\begin{question}
What are the maximal subgroups of $\Symp(M,d\la)$ on which the
cocycle $\F G$ is cohomologous to a bounded one?
\end{question}

\subsection{Proof of Theorem \ref{T:vanish-cpt}.}

Observe that $g^*\la-\la$ vanishes outside the support of $g$.
Therefore if $g$ has a compact support, one may try to normalise $\C K(g)$
to vanish outside the support of $g$ as well.  However in \cite[p.~78]{GK1}
we construct an example (with $M=T^\vee S^1$, the cotangent bundle of $S^1$) where $\C K(g)$ takes different
values on both ends of $M$.  Nevertheless, one can fix an end of $M$ and declare
$\C K(g)$ to vanish there.  This provides a lift of $\C K$ to functions on $M$
(without constant ambiguity).  Thus the connecting
homomorphism sends $\C K$ to zero (cf.~Proposition \ref{P:bock}).
\qed

\begin{remark}
We have the following alternative argument. Since $M$, being a manifold, 
is $\sigma$-compact, there exists a ray
$\gamma\colon [0,\infty)\to~M$ starting at $x$ and
leaving any compact subset of $M$.  For $g\in\Symp_c(M,d\la)$
define $\F b(g):=\int_\gamma g^*\la-\la$.  Notice that this
makes sense as eventually, along $\gamma$, outside the support of $g$
one has $g^*\la=\la$. We have the following computation
in which a curve from $x$ to $h(x)$ is chosen to be the concatenation
of a part of $\gamma$ from $x$ to the outside of the union of the
supports of $g$ and $h$ and then the part of
$-h(\gamma)$ back to $h(x)$.

\begin{eqnarray*}
\F G_{x,\la}(g,h) &=& \int_x^{h(x)}g^*\la - \la\\
&=&\int_{\gamma} g^*\la - \la - \int_{h(\gamma)}g^*\la - \la\\
&=&\int_{\gamma} g^*\la - \la - \int_{\gamma}(gh)^*\la - \la
+ \int_{\gamma}h^*\la - \la= \delta \F b(g,h)
\end{eqnarray*}
\end{remark}

\subsection{Proof of Theorem \ref{T:vanish-isotr}.}
Recall that $L$ is an exact isotropic submanifold of $M$.
Assume that $i^*\la=0$ and choose $x\in L$.  Then 
$$ 
\mathfrak G_{x.\la}(g,h)=\int_x^{h(x)}g^*\la - \la=0, 
$$ 
since the curve joining $x$ and $h(x)$ can be chosen to be contained
in $L$ and $i^*(g^*\la)=g^*(i^*\la)=0$.

Observe that we can always find a primitive $\la$ such
that $i^*\la =~0$. Indeed, let $\la'$ be a primitive
with the property that $[i^*\la']=0$.
We have $i^*(\la') = dF'$ for some function 
$F'\colon L\to \B R$. Extending $F'$ to a function $F\colon M\to \B R$
we obtain $i^*(\la' - dF)=0$ and we take 
$\la:= \la' - dF$.

\qed

\begin{example}\label{E:vanish}
The cohomology class $[\F G]$ vanishes on the 
following subgroups of $\Symp(M,d\la)$:
\begin{enumerate}
\item
$\Symp(M,x)$ -- the isotropy of a point $x\in M;$
%\item
%$\Symp_c(M,d\la)$ -- compactly supported diffeomorphisms;
\item
$\Diff(L)\subset\Symp(T^\vee L)$ where $M=T^\vee L$ is the cotangent bundle of~$L$.
\end{enumerate}
\hfill $\diamondsuit$
\end{example}

\begin{example}
Let $(M,d\la)$ be the universal cover of $(X,\sigma)$.
The deck transformation group $\pi_1(X)\subset \Symp(M,d\la)$
preserves the orbit of $x\in M$. Such an orbit is clearly isotropic.
This shows that the connectivity of $L$ is essential for Theorem
\ref{T:vanish-isotr} to hold, according
to Theorem~\ref{T:sas}. 

Let $S\subset \pi_1(X)$ be a finite set of generators.
The associated Cayley graph $\Gamma_S$ can be embedded in $M$ 
as a connected isotropic subspace invariant under the 
deck transformations. To do this consider the map from a 
wedge of circles $Y$, one per generator
of $\pi_1(X)$ and map it into $X$.  
Then the Cayley graph $\Gamma_S$ is a covering
of $Y$ and the map lifts to the equivariant map into $M$.
The primitive $\la$ represents a nontrivial cohomology class
of $\Gamma_S$. This shows that the hypothesis that $[i^*\la]=0$
is also essential.

In this example the isotropic subspace is
not a submanifold but it can be improved by taking a surface of genus
equal to the number of generators of $G$ and mapping it as an isotropic
subset into $X$.  This can be done provided the dimension of $X$ is big enough.
The lift to $M$ is a  $\pi_1(X)$-invariant closed isotropic submanifold of $M$.
\hfill $\diamondsuit$
\end{example}

\subsection{Proof of Theorem \ref{T:ham}.}\label{SS:ham}
Let us explain first that if $(X,\sigma)$ is a closed symplectic
manifold with an exact universal cover $(M,d\la)$ then there is an
injective homomorphism $\Ham(X,\sigma)\to \Symp(M,d\la)$.

Let $f_t\in \Ham(X,\sigma)$ be an isotopy from the identity to
$f=f_1$. This isotopy can be lifted to an isotopy 
$\tilde f_t\in \Symp(M,d\la)$ from the identity to $\tilde f=\tilde f_1$.
Since the evaluation map $\Ham(X,\sigma)\to X$ induces the trivial
homomorphism on the fundamental group 
\cite[Corollary 9.1.2]{MR2045629}, the endpoint $\tilde f$ does not
depend on the choice of the isotopy $f_t$.

We shall prove that $\F G$ restricted to $\Ham(X,\sigma)$ is a
coboundary. Recall that, due to Proposition \ref{P:bock}, 
$[\F G]=\delta [\C K_{\la}]$. We shall show that
the restriction of the cocycle $\C K_{\la}$ to $\Ham(X,\sigma)$ admits
a lift to a cocycle 
$\widetilde {\C K}_{\la}\colon \Ham(X,\sigma)\to C^{\infty}(M)$.

Let $f_t\in \Ham(X,\sigma)$, for $t\in [0,1]$ be a Hamiltonian isotopy
from the identity to $f=f_1$ generated by a normalised Hamiltonian
function $H_t\colon X\to \B R$. Recall that $H_t$ is
normalised if $\int _X H_t\sigma^n = 0$ for all $t\in[0,1]$.  
Let $F_t\colon M\to \B R$ be defined by
\begin{equation}\label{Eq:F_t}
F_t(x)= \int_0^t (\la(X_s) + \widetilde H_s)(\tilde f_s(x))ds. 
\end{equation}
Here $\widetilde H_t$ is the lift of the Hamiltonian $H_t$ and
$X_t$ is the corresponding vector field. According to
\cite[Proposition 2.8]{GK1} (cf. \cite[Proposition 9.19]{MR2000g:53098};
beware that \cite{MR2000g:53098} uses the opposite sign convention
for Hamiltonians)
we have 
$$
dF_t = d\C K_{\la}(\tilde f_t).
$$

Let $\widetilde {\C K}_{\la}(f) := F_1$.  We need to check that this
definition does not depend on the choice of isotopy from the identity
to $f$. Let $\{f_t\}$ and $\{f'_t\}$ be two Hamiltonian 
isotopies from the identity to $f\in \Ham(X,\sigma)$.  
Let $\{\tilde f_t\}$ and $\{\tilde f'_t\}$ denote their lifts
to $\Ham(M,d\la)$. As explained above $\tilde f_1=\tilde f'_1$.
The formula \ref{Eq:F_t} defines two time-dependent
functions $F_t$ and $F'_t$. Observe that the difference
$F_1-F'_1$ is constant because
$$
d(F_1-F'_1) = d(\C K_{\la}(\tilde f_1)-\C K_{\la}(\tilde f'_1)).
$$
The following calculation shows that this constant is
equal to the function corresponding to the concatenation
of the isotopy $\{f_t\}$ and the isotopy $\{f'_{1-t}\}$.
Let $g\colon [0,2]\to \Ham(X,\sigma)$ denote this concatenation
and let $G$ and $Y$ denote its Hamiltonian function and the
generated vector field respectively.
\begin{eqnarray*}
\lefteqn{\int_0^2(\la(Y_s)+\widetilde G_s)(\tilde g_s(x))ds}\\
&=& \int_0^1(\la(Y_s)+\widetilde G_s)(\tilde g_s(x))ds+
\int_1^2(\la(Y_s)+\widetilde G_s)(\tilde g_s(x))ds \\  
&=& \int_0^1(\la(Y_s)+\widetilde G_s)(\tilde g_s(x))ds+
\int_0^1(\la(Y_{2-t})+\widetilde G_{2-t})(\tilde g_{2-t}(x))dt \\
&=& \int_0^1(\la(X_s)+\widetilde H_s)(\tilde f_s(x))ds -
\int_0^1(\la(X'_{t})+\widetilde H'_{t})(\tilde f'_{t}(x))dt \\
&=& F_1 - F'_1 
\end{eqnarray*}

Consequently, the proof is reduced to showing that if $\{f_t\}$ is a
loop in $\Ham(X,\sigma)$ based at the identity then $F_1(x) = 0$ for
all $x\in M$. We have that
\begin{equation}\label{Eq:action}
F_1(x) =
\int_{\tilde f_t(x)}\la + \int_0^1 \widetilde H_t(\tilde f_t(x))dt.
\end{equation}
and this quantity is known as the action functional of the Hamiltonian
loop $\{f_t\}$. According to Schwarz \cite[Lemma 3.3]{MR1755825},
$F_1$ is constant and depends only on the homotopy class of the
loop $\{f_t(x)\}$.  Finally, it follows from the proof of Proposition
3.1 (i) in McDuff \cite[page 311]{MR2563682} that $F_1(x)$ is equal to
zero. This finishes the proof of Theorem \ref{T:ham}.\qed

It is important that we ask about vanishing of the cocycle on the group
of Hamiltonian diffeomorphisms of a compact quotient of exact symplectic
manifold $M$, i.e. the group
generated by periodic (with respect to the action of the
deck-transformations group $\Gamma$) Hamiltonians.
We already know that the cocycle is nontrivial on $\Ham(M)$ when $M$
is a symmetric space of Hermitian type.
This motivates the following question.

\begin{question}
Does $\F G$ vanish on the group generated by bounded Hamiltonians on $M$?
Given a complete Riemannian metric on $M$, does $\F G$ vanish on the group
generated by Hamiltonians with bounded differential?
\end{question}

%%%%%%%%%%%%%%%%%%%%%%%%%%%%%%%%%%%%%%%%%%%%%%%%%%%%%%%%%%%%%%%%%%%%
\section{Boundedness properties of $\F G$}\label{S:boundedness}
%%%%%%%%%%%%%%%%%%%%%%%%%%%%%%%%%%%%%%%%%%%%%%%%%%%%%%%%%%%%%%%%%%%%

Let $\F c$ be a real valued two-cocycle on a group $G$.  An element
$g$ of $G$ defines a function $\,\vphantom{\F c}^g\F c\colon G\to \B R$
by the formula $$\,\vphantom{\F c}^g\F c(h)=\F c(g,h).$$
%
%\begin{definition}
We say that $\F c$ is {\bf  semibounded} if $\vphantom{\F c}^g\F c$ is a bounded
function on $G$ for any $g\in G$.  By $|g|_\F c$ we denote the supremum norm
of $\,\vphantom{\F c}^g\F c$:
$$
|g|_\F c:=\sup_{h\in G}|\F c(g,h)|.
$$
%\end{definition}

\begin{lemma}\label{L:two}
Assume that $\F c$ is a semibounded two-cocycle on $G$.  Then for all $f,g\in G$
$$|fg|_\F c\leq 2|f|_\F c+|g|_\F c.$$
\end{lemma}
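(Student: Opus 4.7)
The plan is to read off the bound directly from the two-cocycle identity, which on a group $G$ with trivial real coefficients states
$$
\F c(g,h)-\F c(fg,h)+\F c(f,gh)-\F c(f,g)=0
$$
for all $f,g,h\in G$. Solving for $\F c(fg,h)$ gives
$$
\F c(fg,h)=\F c(g,h)+\F c(f,gh)-\F c(f,g),
$$
which expresses the quantity whose supremum over $h$ we wish to control as a sum of three terms, each of which is known to be bounded by hypothesis.

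The second step is to apply the triangle inequality and take the supremum over $h\in G$ term by term. The term $|\F c(g,h)|$ contributes $|g|_\F c$; the term $|\F c(f,g)|$ does not depend on $h$ and is trivially at most $|f|_\F c$; and the key observation is that, since left multiplication by $g$ is a bijection of $G$, one has
$$
\sup_{h\in G}|\F c(f,gh)|=\sup_{h'\in G}|\F c(f,h')|=|f|_\F c.
$$
Adding the three contributions yields $|fg|_\F c\le 2|f|_\F c+|g|_\F c$.

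There is essentially no obstacle here: the proof is a one-line consequence of the cocycle identity, the triangle inequality, and the invariance of the supremum under the bijection $h\mapsto gh$. The only thing to be careful about is the sign convention used for the cocycle identity, but any version leads to the same three-term decomposition and hence to the same bound.
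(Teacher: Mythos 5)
Your proof is correct and is essentially identical to the paper's: both rearrange the cocycle identity to write $\F c(fg,h)$ as $\F c(g,h)+\F c(f,gh)-\F c(f,g)$, apply the triangle inequality, and absorb the $gh$-term into $|f|_\F c$ via the substitution $h\mapsto gh$. Your explicit remark that this substitution is a bijection is a point the paper leaves implicit, but there is no difference in substance.
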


\begin{proof}
By the cocycle identity
\begin{eqnarray*}
|fg|_\F c&=&\sup_h|\F c(fg,h)|\cr
&\leq&\sup_h\left(|\F c(f,g)|+|\F c(f,gh)|+|\F c(g,h)|\right )\cr
&\leq&2\sup_h|\F c(f,h)|+\sup_h|\F c(g,h)|.\cr
&=&2|f|_\F c+|g|_\F c.\cr
\end{eqnarray*}
\end{proof}

%\begin{definition}
A closed symplectic manifold $(X,\sigma)$ is called 
{\bf symplectically hyperbolic} if the pullback of the symplectic
form $\sigma$ to the universal cover is exact and admits a primitive
that is bounded with respect to the Riemannian metric induced from an
auxiliary metric on $X$ \cite[Definition 1.2.C]{MR2003i:53126}.
Examples and constructions of such manifolds are discussed in
\cite{MR2547825}.
 
\begin{proposition}\label{P:semibounded}
Let $(X,\sigma)$ be a symplectically hyperbolic manifold and
let $(M,d\la)$ be its universal cover.  Then $\F G$ is a semibounded
cocycle on $\Ham(X,\sigma)$.
\end{proposition}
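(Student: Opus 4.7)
The plan is to use Remark \ref{R:one-two}, which writes
$$
\F G_{x,\la}(g,h) = \C K_\la(g)(h(x)) - \C K_\la(g)(x).
$$
If I can produce a lift $\widetilde{\C K}_\la(g)\in C^\infty(M)$ of $\C K_\la(g)\in C^\infty(M)/\B R$ that is a bounded function on $M$, then by the triangle inequality
$$
|\F G(g,h)| \leq 2\sup_M |\widetilde{\C K}_\la(g)|
$$
uniformly in $h\in\Ham(X,\sigma)$, which immediately gives $|g|_\F G<\infty$ and hence semiboundedness. (In fact this bound does not use any property of $h$ and so gives boundedness of $\,\vphantom{\F G}^g\F G$ on all of $\Symp(M,d\la)$.)

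To produce such a lift I would reuse the explicit formula appearing in the proof of Theorem \ref{T:ham}. From the outset take the primitive $\la$ used in the definition of $\F G_{x,\la}$ to be the one furnished by symplectic hyperbolicity, i.e.\ bounded in norm on $M$. For the given $g\in\Ham(X,\sigma)$, pick any Hamiltonian isotopy $\{f_t\}$ from the identity to the underlying diffeomorphism of $g$, with normalised Hamiltonian $H_t$ and Hamiltonian vector field $X_t$ on $X$, and set
$$
F_1(y) := \int_0^1 \bigl(\la(\widetilde X_s)+\widetilde H_s\bigr)(\tilde f_s(y))\,ds,
$$
where tildes denote lifts to $M$. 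Equation (\ref{Eq:F_t}) shows that $F_1$ is a valid choice of $\widetilde{\C K}_\la(g)$.

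The remaining step is to verify $\sup_M|F_1|<\infty$. Compactness of $X$ gives $\sup_{X\times[0,1]}|H_s|<\infty$ and $\sup_{X\times[0,1]}\|X_s\|<\infty$; the covering $M\to X$ is a local isometry, so the lifts $\widetilde H_s$ and $\widetilde X_s$ inherit the same sup-bounds on $M$. Combined with the pointwise estimate $|\la(\widetilde X_s)(y)|\leq \|\la\|_y\cdot\|\widetilde X_s(y)\|$ and the hyperbolicity hypothesis that $\|\la\|$ is globally bounded on $M$, the integrand defining $F_1$ is bounded uniformly in $s\in[0,1]$ and $y\in M$, whence $F_1$ is bounded. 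There is no substantive analytic obstacle; the mild delicacy is the freedom to choose $\la$ to be the bounded primitive, which is legitimate because the defining data of $\F G_{x,\la}$ includes the choice of $\la$, and the pleasant feature of the argument is that the explicit lift constructed for a different purpose in the proof of Theorem \ref{T:ham} does the job with no modification.
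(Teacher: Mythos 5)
Your proposal is correct and follows essentially the same route as the paper: reduce via Remark \ref{R:one-two} to showing $\C K_\la(g)$ is bounded, realise $\C K_\la(g)$ by the action-functional lift $F_1$ from the proof of Theorem \ref{T:ham}, and bound it using compactness of $X$ together with the bounded primitive $\la$ supplied by symplectic hyperbolicity. The only cosmetic difference is that you bound the integrand $\la(\widetilde X_s)$ pointwise, whereas the paper phrases the same estimate as $\int_{\tilde g_t(y)}\la \leq C\,\text{Length}(\tilde g_t(y))$.
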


\begin{proof}
Let $g,h\in \Ham(X,\sigma)$ be generated by isotopies $g_t$ and $h_t$
respectively, with the corresponding Hamiltonian functions $G_t$ and
$H_t$. Let $\tilde g_t,\tilde h_t$ and $\widetilde G_t, \widetilde H_t$ be the
lifts to $M$. We need to prove that 
$$
\sup_{h\in \Ham(X,\sigma)}\F G(\tilde g,\tilde h)
$$ 
is finite.

Recall from Remark \ref{R:one-two} that 
$\F G(\tilde g,\tilde h) = 
\C K_{\la}(\tilde g)(\tilde h(x)) - \C K_{\la}(\tilde g)(x)$.  
Thus the statement will follow from the boundedness of
$\C K_{\la}(\tilde g)$ which was proven in 
\cite[Proposition 6.1]{GK1}. We recall the
proof here for the convenience of the reader.

Let $x,y\in M$ and let $C>0$ be a constant bounding the one-form
$\la$ on $M$ with respect to a Riemannian metric induced from a
metric on $X$. The first equality in the following calculation follows
from the formulae in the proof of Theorem \ref{T:ham} on page
\pageref{SS:ham} expressing $\C K_{\la}$ in terms of the action
functional.
\begin{eqnarray*}
&&|\,\C K_{\la}(\tilde g)(y) - \C K_{\la}(\tilde g)(x)\,|\\
&=&\left |\,\int_{\tilde g_t(y)}\la 
   + \int_0^1\widetilde G_t(\tilde g_t(y))dt 
   - \int_{\tilde g_t(x)}\la 
   + \int_0^1\widetilde G_t(\tilde g_t(x))dt\, \right |\\
&\leq & 2\,C\,\max_{x}\/\text{Length}(\tilde g_t(x)) +
2\,\max_{x,t} \widetilde G_t(x) < \infty
\end{eqnarray*}

The last quantity is finite because $\widetilde G_t$ and $\tilde g_t$
are lifts of $G_t$ and $g_t$ respectively and the latter
are defined on a compact manifold $X$. Also, the length is calculated
with respect to the metric induced from $X$.
We also used a straightforward fact that
$\int_{f_t(x)}\la \leq C\,\text{Length}(f_t(x))$.
\end{proof}

\begin{remark}\label{R:nondegeneracy}
The above also shows that if $|g|_\F G=0$ then $\C K_\la(g)$ is constant
and therefore $g^*\la=\la$ which cannot happen if $g\in\Ham(X,\sigma)$.
The reason why $\C K_{\la}(g)$ cannot be constant as explained in
\cite[Theorem 4.1 (1)]{GK1}.  Namely it follows from Schwarz's result
that such $g$ has two fixed points
on which the action functional (see formula \ref{Eq:action}) defining
$\C K_{\la}(g)$ assumes different values.
\end{remark}

Let $\Gamma$ be a finitely generated group.
% equipped with a word metric induced by a finite set of generators.
Let $|g|_S$ denote the word length of an element $g$ of $\Gamma$ with respect
to a fixed finite set of generators $S$.

\begin{proposition}\label{P:lipshitz}
Let $\F c$ be a semibounded cocycle on $\Gamma$ then
$|\cdot|_\F c$ is Lipschitz with respect to the word-length.
More precisely
$$|g|_\F c\leq \left(2\,\max_{s\in S}|s|_\F c\right)\,|g|_S.$$
\end{proposition}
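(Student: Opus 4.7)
The plan is a straightforward induction on the word length $n=|g|_S$, using Lemma \ref{L:two} as the inductive step. Writing $g=s_1s_2\cdots s_n$ as a minimal-length product of generators (taking $S$ to be symmetric, so that any element of $\Gamma$ is expressible as a word in $S$), the first step is to apply Lemma \ref{L:two} with first argument $s_1$ and second argument $s_2\cdots s_n$, which yields
$$|s_1s_2\cdots s_n|_\F c \leq 2|s_1|_\F c + |s_2\cdots s_n|_\F c.$$

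Iterating $n-1$ times peels the generators off one by one from the left, producing
$$|g|_\F c \leq 2\sum_{i=1}^{n-1}|s_i|_\F c + |s_n|_\F c \leq (2n-1)\max_{s\in S}|s|_\F c \leq 2n\cdot\max_{s\in S}|s|_\F c,$$
which is exactly the asserted inequality $|g|_\F c \leq \bigl(2\max_{s\in S}|s|_\F c\bigr)|g|_S$.

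There is essentially no obstacle here: once Lemma \ref{L:two} is in hand, the bound is forced by the induction, and the only point worth noting is the asymmetry of that lemma (coefficient $2$ on the first argument, $1$ on the second). This asymmetry is precisely why the induction proceeds by peeling from the left and why the final generator $s_n$ enters without a factor of $2$, giving the intermediate constant $2n-1$ before one loosens to the cleaner $2n$ stated in the proposition.
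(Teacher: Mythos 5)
Your proof is correct and follows essentially the same route as the paper: induction on word length via Lemma \ref{L:two}, peeling generators from the left. The only (cosmetic) difference is that you track the sharper intermediate constant $2n-1$ before relaxing to $2n$, whereas the paper bounds every summand by $2\max_{s\in S}|s|_{\F c}$ from the start.
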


\begin{proof}
Let $s$ be one of the generators.  By Lemma \ref{L:two} we have
$$
|sg|_\F c\leq2|s|_\F c+|g|_\F c.
$$
Then, by induction,
$$
|g|_\F c=\left|s_{i_1}\ldots s_{i_{|g|_S}}\right|_\F c \leq
2\left|s_{i_1}\right|_\F c+ \ldots +2\left|s_{i_{|g|_S}}\right|_\F c
\leq 2\,
\left (\max_{s\in S}|s|_\F c \right)\,|g|_S.
$$
\end{proof}

On the other hand, as we shall explain next,
the behaviour of $\F G$ with respect to the
first argument is very different. Let
$$
\Symp(M,x,y):=\left\{f\,|\, f(x)=x \text{ and } f(y)=y\right\}
$$ 
be the subgroup consisting of symplectic diffeomorphisms preserving
the points $x,y\in M$. Let $h\in \Symp(M,d\la)$.  Define
$$
\F G_x^h\colon \Symp(M,x,h(x))\to \B R
$$
by $\F G_x^h (f) := \F G_{x,\la}(f,h)$ and observe that
it is a homomorphism of groups. 
\begin{eqnarray*}
\F G_x^h(fg) 
&=& \int_{x}^{h(x)} g^*f^*\la - \la\\
&=& \int_{x}^{h(x)} g^*f^*\la -g^*\la + g^*\la - \la\\
&=& \int_{g(x)}^{gh(x)} f^*\la - \la + \int_x^{h(x)}g^*\la - \la\\
&=& \int_{x}^{h(x)} f^*\la - \la + \int_x^{h(x)}g^*\la - \la\\
&=& \F G_x^h(f) + \F G_x^h(g) 
\end{eqnarray*}

It follows from the Stokes Lemma that
$\F G_x^h(g)$ is equal to the symplectic area of a disc
bounded by $g(\gamma) -\gamma$ where $\gamma$ is a curve
from $x$ to $h(x)$. Hence it is straightforward to show, 
by a local construction in a Darboux chart,
that if $h(x)\neq x$ then the homomorphism $\F G_x^h$ 
is nontrivial. 

\begin{proof}[Proof of Theorem \ref{T:unbounded}]
The above argument proves that the cocycle $\F G_x$ is
unbounded. Observe, e.g. by a local construction mentioned
above, that it directly applies to the subgroup
$\Ham(X,\sigma)\subset \Symp(M,d\la)$ if $(M,d\la)$ is the universal
cover of $(X,\sigma)$.
\end{proof}

%%%%%%%%%%%%%%%%%%%%%%%%%%%%%%%%%%%%%%%%%%%%%%%%%%%%%%%%%%%%%%%%%%%%
\section{Applications}\label{S:applications}
%%%%%%%%%%%%%%%%%%%%%%%%%%%%%%%%%%%%%%%%%%%%%%%%%%%%%%%%%%%%%%%%%%%%

%%%%%%%%%%%%%%%%%%%%%%%%%%%%%%%%%%%%%%%%%%%%%%%%%%%%%%%%%%%%%%%%%%%%
\label{SS:bounded}
\subsection{Symplectic actions of finitely generated groups }
%%%%%%%%%%%%%%%%%%%%%%%%%%%%%%%%%%%%%%%%%%%%%%%%%%%%%%%%%%%%%%%%%%%%

For an element $g$ of a finitely generated group $\Gamma$
one defines its {\bf translation length} as
$$
\|g\|:=\lim_{n\to\infty}{\frac{|g^n|_S}{n}},
$$
where$|g|_S$ denotes the word length of an element $g$ of $\Gamma$ with respect
to a fixed finite set of generators $S$.

\begin{remark}
Another terminology says that the cyclic subgroup generated by $g$
is {\bf undistorted} in $G$ if the translation length of $g$ does not vanish.
Observe that the (non-) vanishing of the translation length does
not depend on the choice of generators.
\end{remark}

\begin{theorem}[Polterovich {\cite[Theorem 1.6.A]{MR2003i:53126}}]
\label{T:polterovich} 
Let $(X,\sigma)$ be a closed symplectically hyperbolic manifold. If\/
$\Gamma \subset \Ham(X,\sigma)$ is a finitely generated group then
every nontrivial element of\/ $\Gamma$ has nonzero translation length.
\end{theorem}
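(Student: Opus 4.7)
The plan is to combine the semiboundedness of $\F G$ (Proposition \ref{P:semibounded}) with the Lipschitz estimate of Proposition \ref{P:lipshitz}, and then to derive linear growth of $|g^n|_{\F G}$ from the action-functional observation that underlies Remark \ref{R:nondegeneracy}. Fix $g\in\Gamma$ nontrivial and a finite generating set $S$. Since $(X,\sigma)$ is symplectically hyperbolic, $\F G$ is semibounded on $\Ham(X,\sigma)$ and hence on $\Gamma$. Proposition \ref{P:lipshitz} then gives $|h|_{\F G}\leq C|h|_S$ for every $h\in\Gamma$, with $C=2\max_{s\in S}|s|_{\F G}$. Applied to $h=g^n$ this yields
$$\|g\|\;\geq\;C^{-1}\liminf_{n\to\infty}\frac{|g^n|_{\F G}}{n},$$
so the theorem reduces to proving a linear lower bound $|g^n|_{\F G}\geq c\,n$ for some $c>0$ whenever $g\ne 1$.

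For that lower bound I would first enlarge the supremum defining $|\cdot|_{\F G}$ to all of $\Ham(X,\sigma)$; a direct inspection of the proofs shows that Lemma \ref{L:two} and Proposition \ref{P:lipshitz} remain valid with this larger sup, so the estimate $|h|_{\F G}\leq C|h|_S$ still holds for $h\in\Gamma$. Since $\Ham(X,\sigma)$ acts transitively on $M$ via canonical lifts of Hamiltonian isotopies (the lifting construction from the proof of Theorem \ref{T:ham}), Remark \ref{R:one-two} combined with the Hamiltonian lift $\widetilde{\C K}_\la$ produces
$$|g^n|_{\F G}\;\geq\;\tfrac{1}{2}\,\OP{osc}_M\bigl(\widetilde{\C K}_\la(g^n)\bigr).$$
The cocycle identity for $\widetilde{\C K}_\la$ then gives the telescoping $\widetilde{\C K}_\la(g^n)=\sum_{i=0}^{n-1}\widetilde{\C K}_\la(g)\circ g^i$, and evaluating at any $g$-fixed point $p$ yields $\widetilde{\C K}_\la(g^n)(p)=n\,\widetilde{\C K}_\la(g)(p)$.

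The main obstacle is the non-degeneracy input, which I would resolve via Remark \ref{R:nondegeneracy}: since $g\ne1$ on the symplectically aspherical manifold $X$, Schwarz's theorem supplies two fixed points $p,q$ of $g$ with $\widetilde{\C K}_\la(g)(p)\ne\widetilde{\C K}_\la(g)(q)$. Setting $\alpha:=|\widetilde{\C K}_\la(g)(p)-\widetilde{\C K}_\la(g)(q)|>0$, the telescoping identity at $p$ and $q$ forces $\OP{osc}_M\bigl(\widetilde{\C K}_\la(g^n)\bigr)\geq n\alpha$, whence $|g^n|_{\F G}\geq n\alpha/2$ and finally $\|g\|\geq\alpha/(2C)>0$. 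The conceptual content is that a $g$-fixed point is automatically fixed by every $g^n$, so the telescoping of $\widetilde{\C K}_\la$ converts the seed non-degeneracy at level $g$ into linearly accumulating action differences at level $g^n$—precisely what is needed to rule out distortion.
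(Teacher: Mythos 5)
Your proof is correct and follows essentially the same route as the paper: semiboundedness plus the Lipschitz estimate reduce everything to linear growth of $|g^n|_{\F G}$, which is then extracted from Schwarz's two fixed points with distinct action values. The only cosmetic difference is that you obtain the linearity $n\mapsto n\alpha$ by telescoping the one-cocycle $\widetilde{\C K}_\la$ at the (lifted, contractible) fixed points, whereas the paper packages the same computation as the statement that $\F G_x^h$ is a homomorphism on $\Symp(M,x,h(x))$; you are in fact slightly more careful than the paper in noting that the supremum in $|\cdot|_{\F G}$ must be taken over all of $\Ham(X,\sigma)$ rather than over $\Gamma$.
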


\begin{proof}
Fix a nontrivial element $g$ in $\Gamma\subset \Ham(X,\sigma)$.  
According to a theorem of Schwarz \cite{MR1755825} (see also Theorem
9.1.6 in \cite{MR2045629}) $g$ has two contractible fixed points
$x,y\in X$ with nonzero action difference. 

Choose $h\in\Ham(X,\sigma)$ such that $h(x)=y$. Then 
$$
\F G(g,h)= \C K_{\la}(g)(h(x)) - \C K_{\la}(g)(x)\neq 0
$$ 
since this is equal to the action difference as explained in
\cite[Lemma 3.4]{GK1} and \cite[Section 2.1]{MR2003i:53126}.
Then
$$
2\,\max_{s\in S}|s|_\F G\,{\frac{|g^n|_S}{n}}\geq
\frac{|g^n|_\F G}{n}\geq
\frac{|\F G(g^n,h)|}{n}=|\F G(g,h)|,
$$
where the first inequality follows from Proposition \ref{P:semibounded}
and Proposition~\ref{P:lipshitz}, the second from the very definition
of $|\cdot|_\F G$ and the last equality from the fact that 
$\F G_x^h$ is a homomorphism. Therefore
$$
\|g\|
=\lim_{n\to\infty}\frac{|g^n|_S}{n}
\geq \frac{|\F G(g,h)|}{2\max_{s\in S}|s|_\F G}>0.
$$
\end{proof}
\begin{remark}
We gave a similar proof of this theorem in \cite{GK1}.
The new element in the above proof is the use of the
semiboundedness property of the cocycle $\F G$.
\end{remark}

\subsection{Foliated symplectic bundles}\label{SS;foliated}
Recall that the cohomology of a group $G$ is isomorphic to the
cohomology of the classifying space $BG^{d}$.
Thus the cohomology class 
$[\F G]$ is a characteristic class for symplectic 
{\em foliated bundles}. By this we mean a bundle $(M,d\la)\to E\to B$ admitting a
foliation transverse to the fibres and such that its holonomy is a
discrete subgroup of $\Symp(M,d\la)$. The corresponding characteristic
class in $H^2(B;\B R)$ will be denoted by $\F G(E)$.

We say that a bundle $L\to E'\to B$ is a {\em foliated subbundle} of
$E$ if it is a subbundle and the total space $E'$ is a union of the
leaves of the foliation in $E$. Existence of such a subbundle is
equivalent to the reduction of the structure group from
$\Symp(M,d\la)^d$ to a subgroup preserving the subspace $L\subset M$.
Here $L$ and $M$ are identified with
the fibres over $b\in B$ of $E'$ and $E$ respectively.
The following result is a direct consequence of Theorem \ref{T:vanish-isotr}.

\begin{corollary}\label{C:foliated}
Let $i\colon L\to M$ be the inclusion of an exact isotropic subma\-ni\-fold.
Let $(M,d\la)\to E\to B$ be a foliated symplectic
bundle.  If it admits a foliated subbundle $L\to E'\to B$ then the
cohomology class $\F G(E)\in H^2(B;\B R)$ is trivial.\qed
\end{corollary}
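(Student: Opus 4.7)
The plan is to exhibit the characteristic class $\F G(E)$ as a pullback of $[\F G]$ along a map that factors through the classifying space $B\Symp_L(M,d\la)^d$, and then invoke Theorem \ref{T:vanish-isotr} directly. The existence of a foliated subbundle $L\to E'\to B$ amounts, as noted just before the statement, to a reduction of the structure (holonomy) group from $\Symp(M,d\la)^d$ to the subgroup $\Symp_L(M,d\la)^d$ preserving $L\subset M$. Unpacking the definition, this means the classifying map $c_E\colon B\to B\Symp(M,d\la)^d$ of the foliated bundle $E$ admits a factorisation
\begin{equation*}
c_E\colon B\xrightarrow{\ c_{E'}\ } B\Symp_L(M,d\la)^d\xrightarrow{\ Bj\ } B\Symp(M,d\la)^d,
\end{equation*}
where $j\colon\Symp_L(M,d\la)\hookrightarrow\Symp(M,d\la)$ is the inclusion.

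Next I would apply cohomology to this factorisation. By the very definition of $\F G(E)$ as $c_E^*[\F G]$ (identifying group cohomology with the cohomology of $B(-)^d$ as in Section \ref{S:definition}), we get
\begin{equation*}
\F G(E)=c_E^*[\F G]=c_{E'}^{\,*}\bigl((Bj)^*[\F G]\bigr)=c_{E'}^{\,*}\bigl(j^*[\F G]\bigr).
\end{equation*}
But $j^*[\F G]\in H^2(\Symp_L(M,d\la);\B R)$ is exactly the restriction of $[\F G]$ to the subgroup $\Symp_L(M,d\la)$, which vanishes by Theorem \ref{T:vanish-isotr} since $L$ is a closed connected exact isotropic submanifold. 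Therefore $\F G(E)=c_{E'}^{\,*}(0)=0$ in $H^2(B;\B R)$.

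There is really no hard step here: the argument is a formal consequence of Theorem \ref{T:vanish-isotr} combined with the translation between reductions of structure group of a foliated bundle and factorisations of the classifying map. The only mild subtlety to verify in passing is that the reduction of the holonomy group really occurs at the level of discrete groups (so that the factorisation is through $B\Symp_L(M,d\la)^d$ and not merely through a classifying space for a topological subgroup); this is immediate from the assumption that the holonomy of the transverse foliation on $E'$ is the restriction of that of $E$, hence takes values in $\Symp_L(M,d\la)$ with the discrete topology.
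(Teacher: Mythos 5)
Your proof is correct and is exactly the argument the paper intends: the authors state the corollary as a direct consequence of Theorem \ref{T:vanish-isotr}, with the foliated subbundle interpreted as a reduction of the (discrete) holonomy group to $\Symp_L(M,d\la)$, which is precisely the factorisation of the classifying map you write down. Your version just spells out the formal pullback step that the paper leaves implicit.
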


The above corollary 
gives an obstruction to the existence of foliated
subbundles with isotropic fibres. The next result, that immediately
follows from Theorem \ref{T:sas}, provides a construction of foliated
symplectic bundles with a nontrivial obstruction.

\begin{corollary}\label{C:top}
Let $(M,d\la)$ be the universal cover of a closed symplectic
$2n$-manifold $(X,\sigma)$.
The flat bundle
$$
M \to E:=M\times_{\pi_1(X)} M \to X
$$
has nontrivial characteristic class
$\F G(E)$. Moreover, the class is equal to the
cohomology class of the symplectic form and hence
$\F G(E)^n\neq 0$. \qed
\end{corollary}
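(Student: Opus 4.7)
The plan is to reduce Corollary \ref{C:top} to Theorem \ref{T:better-sas} by identifying the classifying map of the flat bundle $E\to X$ with the composition considered there. First I would unpack the definition: the total space $E=M\times_{\pi_1(X)}M$ is the quotient of $M\times M$ by the diagonal action of $\pi_1(X)$ where both factors carry the deck transformation action, and the projection sends the class of $(m_1,m_2)$ to the class of $m_2$ in $X=M/\pi_1(X)$. This presents $E\to X$ as a flat bundle with fibre $M$ whose holonomy representation is precisely the inclusion $\pi_1(X)\hookrightarrow \Symp(M,d\la)$ as deck transformations.

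Consequently, the classifying map of the foliated bundle $E\to X$ is exactly the composition
$$X\to B\pi_1(X)\to B\Symp(M,d\la)^d$$
already considered in Theorem \ref{T:better-sas}. Applying that theorem to the regular cover $M\to X$ (with $\Gamma=\pi_1(X)$, noting the hypothesis $H^1(M;\B R)=0$ is ensured by the assumption that $(M,d\la)$ is exact), we obtain $\F G(E)=[\sigma]\in H^2(X;\B R)$.

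Finally, since $X$ is a closed symplectic $2n$-manifold, $\sigma^n$ is a volume form and hence $[\sigma]^n\neq 0$ in $H^{2n}(X;\B R)$, which gives $\F G(E)^n\neq 0$. The only point worth a moment's care is the identification of the classifying map, but this is forced by the description of $E$ as an associated bundle to the universal cover viewed as a principal $\pi_1(X)$-bundle; no further argument is needed.
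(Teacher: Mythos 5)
Your proof is correct and is essentially the paper's argument: the corollary is derived directly from Theorem \ref{T:better-sas} by identifying the classifying map of the flat bundle $E$ with the composition $X\to B\pi_1(X)\to B\Symp(M,d\la)^d$, exactly as you do. One small correction: the hypothesis $H^1(M;\B R)=0$ holds because $M$ is a universal cover and hence simply connected, not because the symplectic form on $M$ is exact.
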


\begin{example}\label{E:foliated}
\hfill
\begin{enumerate}
\item
Let $(M,d\la)\to E\to B$ is a foliated symplectic bundle
admitting a section whose image is equal to a leaf of the
foliation then $\F G(E)=~0$. Moreover, since the obstruction
$\F G(E)$ is a real cohomology class it is zero if the
bundle admits a leaf finitely covering the base.
Indeed, by pulling-back the bundle over a connected component of
such a leaf we obtain a bundle with a section. Moreover, a finite 
connected covering induces an isomorphism on the real cohomology.
\item
Let $X\to E'\to B$ be a smooth foliated bundle.
Consider vertical cotangent bundle $T^\vee X \to E \to B$.
Observe that the later is flat symplectic bundle and $E'$ is
a foliated subbundle of $E$.  Since the image of the zero section 
$X\subset T^\vee X$ is a Lagrangian submanifold we get $\F G(E)=~0$.
\item
Let $\Sigma$ be a closed and oriented surface of positive genus.
The foliated bundle 
$\widetilde \Sigma \to 
E:=\widetilde \Sigma \times_{\pi_1(\Sigma)} \widetilde \Sigma
\to \Sigma$
does not admit a foliated subbundle of positive codimension.
Indeed, it follows from Corollary~\ref{C:top} that
$\F G(E)\neq 0$. 
\item
The identity map $\Symp(M,d\la)^d \to \Symp(M,d\la)$ induces a
homomorphism $H^*(B\Symp(M,d\la);\B R) \to H^*(B\Symp(M,d\la)^d;\B R)$. 
In general, $\F G$ is not contained in the image of this homomorphism.
To see this consider $M=\B R^{2n}$. It follows from Theorem \ref{T:better-sas}
that the restriction of $\F G$ is nontrivial on 
$\B Z^{2n}\subset \Symp(\B R^{2n},d\la)$. However, the 
composition
$
\B Z^n \to \Symp(\B R^{2n},d\la)
$
factors through the contractible group $\B R^{2n}$ and hence
it induces the trivial map on cohomology.
\end{enumerate}
\hfill $\diamondsuit$
\end{example}

\bibliography{../../bib/bibliography}
\bibliographystyle{acm}

\end{document}